\newtheorem{definition}{Definition}[section]
\newtheorem{lemma}[definition]{Lemma}
\newtheorem{proposition}[definition]{Proposition}
\newtheorem{remark}[definition]{Remark}
\patchcmd{\@maketitle}{\LARGE \@title}{\fontsize{16}{19.2}\selectfont\@title}{}{}
\newsavebox\affbox
\author[ ]{\textbf{Renato Calleja}\thanks{calleja@mym.iimas.unam.mx}, \textbf{Pablo Padilla-Longoria}\thanks{pablo@mym.iimas.unam.mx} and \textbf{Edgar Rodr\'iguez-Mendieta}\thanks{Corresponding author, edgarm0509@ciencias.unam.mx}}
\affil[$\ast$]{Depto. Matem\'aticas y Mec\'anica, IIMAS, Universidad Nacional Aut\'onoma de M\'exico, Apdo. Postal 20-126, C.P. 04510, M\'exico CDMX, M\'exico.}
\affil[$\dagger$]{Depto. Matem\'aticas y Mec\'anica, IIMAS, Universidad Nacional Aut\'onoma de M\'exico, Apdo. Postal 20-126, C.P. 04510, M\'exico CDMX, M\'exico.}
\affil[$\ddagger$]{Depto. Matem\'aticas y Mec\'anica, IIMAS, Universidad Nacional Aut\'onoma de M\'exico, Apdo. Postal 20-126, C.P. 04510, M\'exico CDMX, M\'exico.}
\titlespacing\section{0pt}{12pt plus 4pt minus 2pt}{0pt plus 2pt minus 2pt}
\titlespacing\subsection{12pt}{12pt plus 4pt minus 2pt}{0pt plus 2pt minus 2pt}
\titlespacing\subsubsection{12pt}{12pt plus 4pt minus 2pt}{0pt plus 2pt minus 2pt}
\titleformat{\section}{\normalfont\fontsize{10}{15}\bfseries}{\thesection.}{1em}{}
\titleformat{\subsection}{\normalfont\fontsize{10}{15}\bfseries}{\thesubsection.}{1em}{}
\titleformat{\subsubsection}{\normalfont\fontsize{10}{15}\bfseries}{\thesubsubsection.}{1em}{}
\titleformat{\author}{\normalfont\fontsize{10}{15}\bfseries}{\thesection}{1em}{}
\title{
\textbf{
\huge{ 
Biological network dynamics: Poincar\'{e}-Lindstedt series and the effect of delays
}
}
}
\date{\today}  
\begin{document}
\pagestyle{headings}	
\newpage
\setcounter{page}{1}
\renewcommand{\thepage}{\arabic{page}}
\captionsetup[figure]{labelfont={bf},labelformat={default},labelsep=period,name={Figure}}\captionsetup[table]{labelfont={bf},labelformat={default},labelsep=period,name={Table}}
\setlength{\parskip}{0.5em}
\maketitle
\begin{center}
\noindent\rule{15cm}{0.5pt}
\end{center}
\begin{abstract}
This paper focuses on the Hopf bifurcation in an activator-inhibitor system without diffusion which can be modeled as a delay differential equation. The main result of this paper is the existence of the Poincar\'e-Lindstedt series to all orders for the bifurcating periodic solutions. The model has a non-linearity which is non-polynomial, and yet this allows us to exploit the use of Fourier-Taylor series to develop order-by-order calculations that lead to linear recurrence equations for the coefficients of the Poincar\'e-Lindstedt series. As applications, we implement the computation of the coefficients of these series for any finite order, and use a pseudo-arclength continuation to compute branches of periodic solutions.


%

\textbf{\textit{Keywords}}: \textit{Delay differential equation, Hopf bifurcation, Poincar\'e-Lindstedt series.}
\end{abstract}
	
\begin{center}
	\noindent\rule{15cm}{0.4pt}
\end{center}

\section{Introduction}
Pattern formation in living systems is one of the central problems in developmental biology (see \cite{murrayII}). The mechanisms underlying the decoding of genetic information and how this information determines the emergence of structures (phenotype) is still widely unknown. In recent years, many genetic regulatory networks, GRNs, have been proposed as a model to understand this genotype to phenotype maping \cite{gierer_meinhardt1972}. On the other hand, reaction-diffusion systems have been advanced as models for biological pattern formation since the work by Turing \cite{turing52}. In fact, in his work Turing suggests that the diffusive chemicals, morphogenes, might be associated with genes. It is natural to associate the expression level of a gene with the concentration of the corresponding protein. This observation automatically links a GRN with a reaction diffusion system. In particular, in the present paper, we study one of the most important GRNs, namely, the activator-inhibitor system. There are already works devoted to this subject. However, the role played by delays deserves attention. Indeed, due to many factors, including the fact that the morphogenes take a positive time to diffuse, delays occur naturally in genetic networks. As it is well known in control engineering, delays can completely change the dynamical and stability properties of a system. The mathematical model that we will study is based in the Gierer-Meinhardt activator-inhibitor system studied in \cite{murrayII}, that is a reaction-diffusion system without time-delay. For the purposes of this paper, we consider a Gierer-Meinhardt model without spatial variation and two discrete delays,
\begin{equation*}
	\left.
	\begin{array}{rcl}
		P'(t) 
		& = 
		& \dfrac{k_{3} P^{2}(t)}{Q(t-\tau_{P})} - k_{2}P(t) + k_{1},	\\ \\    
		Q'(t) 
		& = 
		& k_{4} P^{2}(t-\tau_{Q}) - k_{5} Q(t) + k_{6},
	\end{array}
	\right.
\end{equation*}
where $P$ and $Q$ are two chemical species and $k_{1}, k_{2},k_{3},k_{4},k_{5}> 0 $. Discrete delays $\tau_{P},\tau_{Q} > 0$ are related with the fact of the interaction between $P$ and $Q$ is delayed by a time (a case of equal delays is studied in \cite{lee_gaffney_monk}). The constant $k_{6}\geq 0$ is related with the initial presence of any of the two chemical species (see \cite{gierer_meinhardt1972}).

Biological arguments allow us to suppose that $\tau_{P} = s_{0}\tau_{Q}$ with $s_{0}>1$. By a rescaling the $t$ variable, the above equation is equivalent to,
\begin{equation}\label{DDE_GM_noncentered_delays_1_s0}
	\left.
	\begin{array}{rcl}
		P'(t)
		& = 
		& \gamma \left[ \dfrac{P^{2}(t)}{Q\left(t-s_{0} \right)} - bP(t) + a \right], \\ \\
		Q'(t) 
		& = 
		& \gamma \left[P^{2}\left(t-1 \right) - Q(t) + c \right],
\end{array}
\right.
\end{equation}
where $a, b, \gamma > 0$ and $c\geq 0$ (note that $P$ and $Q$ have been rescaled but we kept the same notation).

An equilibrium $\left(u_{0}, v_{0}\right)^{T} \in \mathbb{R}^{2}$ of (\ref{DDE_GM_noncentered_delays_1_s0}), with $u_0, v_{0}\neq 0$,  satisfy,
\begin{subequations}\label{eq:equilibrium_u0_v0_of_DDE_noncentred_delays_1_s0} 
	\begin{align} 
		u_{0}^{3}  + c u_{0}
		&=
		\dfrac{a+1}{b} u_{0}^{2} + \dfrac{a c}{b}, \label{eq:equilibrium_u0}\\
		v_{0}
		&=
		u_{0}^{2} + c, \label{eq:equilibrium_v0}
	\end{align} 
\end{subequations}
from which we can find pairs $\left(u_{0}, v_{0} \right)$ in the first quadrant. Thus, considering $u(t) = P(t) - u_{0}$ and $v(t) = Q(t) - v_{0}$, system (\ref{DDE_GM_noncentered_delays_1_s0}) is equivalent to the following delay differential equation around $0$,
\begin{equation}\label{DDE_GM_around_0}
	\left.
	\begin{array}{rcl}
     u'(t)
     & = 
     & \gamma \left\lbrace \dfrac{\left[ u(t) + u_{0} \right]^{2}}{v\left(t-s_{0} \right) + v_{0}} - b\left[u(t)+u_{0}\right] + a \right\rbrace, \\ \\     
     
     v'(t) 
     & = 
     & \gamma \left\lbrace \left[ u\left(t-1 \right) + u_{0}\right]^{2} - \left[v(t) + v_{0}\right] + c \right\rbrace.
	\end{array}
\right.
\end{equation}

It is known that finding an explicit periodic solution of a delay differential equation is very complicated. However, there are methods (analytical and numerical) that allow us approximating periodic solutions, in our case using the Poincar\'e-Lindstedt series and the collocation method. The Poincar\'e-Lindstedt series is a perturbative method that, using Fourier-Taylor series, allow us analytically approximating a periodic solution by solving, order-by-order, a delay differential equation. On the other hand, the collocation method is a numerical method that allow us aproximanting points on the curve with sufficient accuracy. In this work both methods complement each other, since we use the Poincar\'e-Lindstedt series (at order $\varepsilon^{3}$ are sufficient in our simulations) as the initial guess of the Newton method generated by the collocation method.

The structure of the paper is organized in four sectios and a brief appendix. In Section 2 we find conditions about the born of a limit cycle through a Hopf bifurcation with $\gamma$ as the bifurcation parameter. In Section 3 we study the problem of finding analytical approximations of the periodic solution of model (\ref{DDE_GM_around_0}) using the Poincar\'e-Lindstedt series as a Fourier-Taylor series, whose orders are fully determined by manipulating power series algebraically (as is the case of automatic differentiation, see \cite{haro_canadell_figueras_luque_mondelo}). This algebraic manipulation allow us to find the periodic solution by solving order-by-order for the coefficients of the Poincar\'e-Lindstedt series by a linear recurrence equations. This leads in a practical way for the implementation of this section in any programming language. Moreover, without using any symbolic manipulator. In section 4, the model (\ref{DDE_GM_around_0}) is discretized by a collocation method, searching for points that lie on the periodic orbit. This transforms the problem into a Newton method, whose initial guess is precisely the Poincar\'e-Lindstedt series at order $\varepsilon^{3}$. This order is sufficient for the convergence of the Newton method. The found approximated periodic solution is the initial point of the pseudo-arclenght continuation method.
\section{Hopf Bifurcation}\label{hopf_bif_section}
We establish in the following the existence of a positive value $\gamma_{0}$ such that when the parameter $\gamma$ exceeds $\gamma_{0}$, the system (\ref{DDE_GM_around_0}) generate a limit cycle type oscillatory behaviour.

The system (\ref{DDE_GM_around_0}) is equivalent to,
\begin{equation}\label{DDE_GM_en formato_f}
	x'(t)
	=
	\gamma f
	\left(x(t), x(t-1), x(t-s_{0})\right),
\end{equation}
where $x=\left(u,v\right)^{T} \in \mathbb{R}^{2}$, $f:\mathbb{R}^{2}\times\mathbb{R}^{2}\times\mathbb{R}^{2} \rightarrow \mathbb{R}^{2}$ and
$p=\left(p_{1}, p_{2}\right)^{T}$, $q=\left(q_{1}, q_{2}\right)^{T}$, $w=\left(w_{1}, w_{2}\right)^{T}\in\mathbb{R}^{2}$ as,
\begin{equation*}
	f(p, q, w)
	=
	\left(
	\begin{array}{c}
		\dfrac{\left( p_{1} + u_{0} \right)^{2}}{w_{2} + v_{0}} - b\left( p_{1} + u_{0} \right) + a \\ \\
     	\left( q_{1} + u_{0} \right)^{2} - \left( p_{2} + v_{0} \right) + c
	\end{array}
	\right).
\end{equation*}

We ommit $q_{2}$, $w_{1}$ in definition of $f$ because $v(t-1)$ and $u(t-s_{0})$ do not appear in (\ref{DDE_GM_around_0}).

\begin{remark}\label{remark_formal_definition_of_C_n_tau}
In general, a delay differential equation with finite discrete delays is of the form
\begin{equation}\label{general_autonomous_DDE}
	x'(t) = f(x(t), x(t-\tau_{1}),\dots,x(t-\tau_{K})),
\end{equation}
with $x(t)\in\mathbb{R}^{n}$ and $f:\mathbb{R}^{(K+1)n}\rightarrow\mathbb{R}^{n}$ has $0$ as equilibrium (system (\ref{DDE_GM_en formato_f}) is a particular case). Using the formalism of \cite{guo_wu}, let $\tau=\max{\tau_{k}}$ and assume that $\mathbb{R}^{n}$ is equipped with the Euclideam norm $\vert \cdot\vert$. For $t_{0}\in\mathbb{R}$ define the mapping $x_{t_{0}}$ by $x_{t_{0}}(\theta)=x(t_{0}+\theta)$ for $\theta\in\left[-\tau, 0\right]$. This function $x_{t_{0}}$, uniquely determines $x(t)$ for all $t\geq t_{0}$. In this form, the state space for (\ref{general_autonomous_DDE}) is $\mathcal{C}_{n,\tau} = \mathcal{C}\left(\left[-\tau, 0\right],\mathbb{R}^{n}\right)$, which denote the Banach space of continuous mappings from $\left[-\tau, 0\right]$ into $\mathbb{R}^{n}$ equiped with the supremum norm $\Vert \phi\Vert=\sup_{-\tau\leq \theta\leq 0}\vert \phi(\theta)\vert$ for $\phi\in\mathcal{C}_{n,\tau}$. Therefore if $F:\mathcal{C}_{n,\tau}\rightarrow\mathbb{R}^{n}$ is defined as $F(\phi) = f(\phi(0), \phi(-\tau_{1}),\dots, \phi(-\tau_{K}))$ then system (\ref{general_autonomous_DDE}) can be rewritten as 
\begin{equation*}
x'(t) = F(x_{t}),
\end{equation*}
whose linearization $\mathcal{L}:\mathcal{C}_{n,\tau}\rightarrow\mathbb{R}^{n}$, using the Riezs representation theorem, is given by
\begin{equation}\label{linearization_of_a_general_autonomous_DDE}
x'(t)=\mathcal{L}x_{t} = \displaystyle\int_{-\tau}^{0} d \eta(\theta)x_{t}(\theta),
\end{equation}
where $\eta:\left[-\tau,0\right]\rightarrow\mathbb{R}^{n^{2}}$ is an $n\times n$ matrix-valued function whose components are of bounded variation.

In contrast to $\mathbb{R}^{n}$, the space $\mathcal{C}_{n,\tau}$ does not have a natural inner product associated with its norm. However following [Hale], one can introduce a substitute device that acts like an inner product in $\mathcal{C}_{n,\tau}$, therefore is necesary to consider an adjoint operator. For this, let $\mathcal{C}_{n,\tau}^{*}=\mathcal{C}\left(\left[0,\tau\right],\mathbb{R}^{n*}\right)$ be the space of continuous functions from $\left[0,\tau\right]$ to $\mathbb{R}^{n*}$ with $\Vert\psi\Vert=\sup\limits_{t\in\left[0,\tau\right]}\vert\psi(t)\vert$ for $\psi\in\mathcal{C}_{n,\tau}^{*}$, where $\mathbb{R}^{n*}$ is the space of $n$-dimensional real row vectors. The formal adjoint equation associated with the linear equation (\ref{linearization_of_a_general_autonomous_DDE}) is given by
\begin{equation}\label{general_adjoint_operator}
\psi'(t)=-\displaystyle\int_{-\tau}^{0} \psi(t-\theta)d\eta(\theta),
\end{equation}
for $\psi\in\mathcal{C}_{n,\tau}^{*}$.
\end{remark}

To linealizate (\ref{DDE_GM_en formato_f}) calculate $A = D_{p}f(0)$, $B_{1} = D_{q}f(0)$ and $B_{2}=D_{w}f(0)$. Hence
\begin{equation*}
	A
	=
	\left(
	\begin{array}{cc}
		\dfrac{2 u_{0}}{v_{0}} - b &     0   \vspace{-2mm} \\ \\
     	0                          &     -1
	\end{array}
	\right),\text{ }
	B_{1}
	=
	\left(
	\begin{array}{cc}
		      0      & 0 \\ \\
     	   2 u_{0}   & 0
	\end{array}
	\right),\text{ }
	B_{2}
	=
	\left(
	\begin{array}{cc}
		0 & -\dfrac{u_{0}^{2}}{v_{0}^{2}} \vspace{-2mm} \\ \\
        0 &                    0
	\end{array}
	\right).
\end{equation*}

Then the linearization of (\ref{DDE_GM_en formato_f}) around $0$ is given by
\begin{equation*}
	\begin{split}
		x'(t)
		&=
		\gamma \displaystyle\int_{-\tau}^{0} d \eta(\theta)x_{t}(\theta) \\	\\
		&=
		\gamma\left[Ax\left(t\right) + B_{1}x\left(t-1\right) + B_{2}x\left(t-s_{0}\right) \right] \\	\\
		&=:
		\gamma L(x(t), x(t-1), x(t-s_{0})),
	\end{split}
\end{equation*}
where
\begin{equation}\label{definition_of_eta}
	\eta(\theta)
	=
	\left\{
			\begin{array}{ll} 
				B_{2} + B_{1} + A,	& \theta=0, \\ \\ 
				B_{2} + B_{1},		& \theta\in\left[-1,0\right), \\ \\ 
				B_{2},				& \theta\in\left(-s_{0},-1\right), \\ \\
				0,					& \theta = -s_{0}.
			\end{array}
	\right.
\end{equation}

According to \cite{guo_wu}, its characteristic matrix is
\begin{equation*}
	\begin{split}
		\Delta(\lambda,\gamma)
		&=
		\lambda I_{2} - \gamma\left( A + e^{-\lambda}B_{1} + e^{-\lambda s_{0}}B_{2} \right) \\
		&=
		\left(
		\begin{array}{cc}
			\lambda+\gamma\left(-\dfrac{2 u_{0}}{v_{0}}+b\right) & \gamma\dfrac{u_{0}^{2}}{v_{0}^{2}}e^{-\lambda s_{0}} \\ \\
        	-2\gamma u_{0}e^{-\lambda}                &            \lambda+\gamma
		\end{array}
		\right).
	\end{split}
\end{equation*}

Using the notation in \cite{cooke_grossman}, we write the characteristic equation,
\begin{equation}\label{DDE_characteristic_equation}
	M(\lambda, \gamma)
	=
	\lambda^{2} + \gamma b_{1}\lambda + \gamma^{2}b_{0} + \gamma^{2} b_{2} e^{-2\tau\lambda},
\end{equation}
where,
\begin{subequations}\label{eq:b0_b1_b2_tau_values}
	\begin{align} 
		b_{0}
		&=
		-\dfrac{2 u_{0}}{v_{0}}+b, \label{eq:b0_value} \\
		b_{1}
		&=
		-\dfrac{2 u_{0}}{v_{0}}+b+1, \label{eq:b1_value} \\
		b_{2}
		&=
		\dfrac{2 u_{0}^{3}}{v_{0}^{2}}, \label{eq:b2_value}
	\end{align} 
\end{subequations}
and $\tau = \dfrac{s_{0}+1}{2}.$ Notice that $b_{2}>0$. Let's analyze the zeros of $M$ of the form $\lambda = i\omega$ with $\omega, \gamma>0$. In this way $ M(i\omega, \gamma) = 0$ is equivalent to
\begin{subequations}\label{eq:real_part_imaginary_part_characteristic_equation}
	\begin{align} 
		-\omega^{2} + \gamma^{2} b_{0} + \gamma^{2} b_{2}\cos(2\omega\tau)
		&=
		0, \label{eq:re_part_characteristic_equation} \\ 
		\gamma b_{1}\omega - \gamma^{2} b_{2}\sin(2\omega\tau)
		&=
		0. \label{eq:im_part_characteristic_equation} 
	\end{align} 
\end{subequations}

Notice that from (\ref{eq:im_part_characteristic_equation}) $b_{1}=0$ if and only if $\sin\left(2\omega\tau\right)=0$. Therefore we consider two cases $b_{1}\neq 0$ and $b_{1}=0$.
\begin{enumerate}
	\item $b_{1}\neq 0$. From (\ref{eq:im_part_characteristic_equation}) it follows that
	\begin{equation}\label{gamma0_hopf_bifurcation_b1_neq_0}
		\gamma = \dfrac{b_{1}\omega}{b_{2}\sin\left(2\omega \tau\right)}.
	\end{equation}

	To find $\omega$, we substitute $\gamma$ in (\ref{eq:re_part_characteristic_equation}) and let $\Omega = \cos\left(2\omega\tau\right)$. We have $b_{2}^{2}\Omega^{2} + b_{1}^{2}b_{2}\Omega + \left(b_{0}b_{1}^{2} - b_{2}^{2} \right) = 0$, whose solutions are $\Omega_{\pm} = \dfrac{-b_{1}^{2} \pm \sqrt{\delta}}{2b_{2}}$ and its discriminant is $\delta = b_{1}^{2}\left(b_{1}^{2}-4b_{0}\right) + 4b_{2}^{2}$. Notice from
	\begin{equation}\label{b1_at_2_minus_4b0}
		b_{1}^{2} - 4b_{0} = \left(b_{1} - 2\right)^{2} \geq 0,
	\end{equation}
	that $\delta\geq 0$. The following lemma shows that only $\Omega_{+}$ is plausible.
	\begin{lemma}
		$\vert \Omega_{-}\vert >1$
	\end{lemma}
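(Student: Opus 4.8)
The plan is to evaluate $|\Omega_{-}|$ explicitly and then bound $\sqrt{\delta}$ from below by $2b_{2}$, after which the inequality becomes elementary. Since we are in the case $b_{1}\neq 0$, this is the only situation that needs to be addressed.

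First I would fix the sign of $\Omega_{-}$. Recall $\Omega_{-}=\dfrac{-b_{1}^{2}-\sqrt{\delta}}{2b_{2}}$. The denominator is strictly positive because $b_{2}>0$ (as noted just before the lemma), while the numerator is strictly negative: $-\sqrt{\delta}\leq 0$ and $-b_{1}^{2}<0$ since $b_{1}\neq 0$. Hence $\Omega_{-}<0$, so that $|\Omega_{-}|=\dfrac{b_{1}^{2}+\sqrt{\delta}}{2b_{2}}$.

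The key step is to rewrite the discriminant using (\ref{b1_at_2_minus_4b0}). Substituting $b_{1}^{2}-4b_{0}=(b_{1}-2)^{2}$ into $\delta=b_{1}^{2}(b_{1}^{2}-4b_{0})+4b_{2}^{2}$ gives $\delta=b_{1}^{2}(b_{1}-2)^{2}+4b_{2}^{2}$, a sum of two nonnegative terms. In particular $\delta\geq 4b_{2}^{2}$, and since $b_{2}>0$ this yields $\sqrt{\delta}\geq 2b_{2}$.

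Combining the two observations, $|\Omega_{-}|=\dfrac{b_{1}^{2}+\sqrt{\delta}}{2b_{2}}\geq\dfrac{b_{1}^{2}+2b_{2}}{2b_{2}}=1+\dfrac{b_{1}^{2}}{2b_{2}}>1$, where the strict inequality uses $b_{1}^{2}>0$ (because $b_{1}\neq 0$) and $b_{2}>0$. This proves the claim. There is no real obstacle here; the one place where the structure of the problem enters is the recognition, via (\ref{b1_at_2_minus_4b0}), that $b_{1}^{2}-4b_{0}$ is a perfect square, which is what makes $\delta\geq 4b_{2}^{2}$ automatic. Everything else is sign-chasing.
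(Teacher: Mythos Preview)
Your proof is correct. It is, however, a different route from the paper's. The paper argues by contradiction: assuming $|\Omega_{-}|\leq 1$, one squares to obtain $(b_{1}^{2}+\sqrt{\delta})^{2}\leq 4b_{2}^{2}$, substitutes the explicit form of $\delta$, cancels $4b_{2}^{2}$ and the factor $2b_{1}^{2}$, and reaches $b_{1}^{2}+\sqrt{\delta}-2b_{0}\leq 0$; this contradicts the identity (\ref{b1_at_2_minus_2b0}), namely $b_{1}^{2}-2b_{0}=(b_{1}-1)^{2}+1>0$. Your argument is direct and rests instead on (\ref{b1_at_2_minus_4b0}): writing $\delta=b_{1}^{2}(b_{1}-2)^{2}+4b_{2}^{2}$ immediately gives $\sqrt{\delta}\geq 2b_{2}$, from which the strict bound follows by adding the positive term $b_{1}^{2}/(2b_{2})$. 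Your version is more transparent---no squaring step or hidden cancellation---while the paper's approach has the minor advantage of reusing the same identity (\ref{b1_at_2_minus_2b0}) that drives the transversality computation later.
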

	
	\begin{proof}
		Looking for a contradiction, we suppose that $\vert \Omega_{-}\vert \leq 1$ and arrive to,
		\begin{equation*}
			b_{1}^{2} + \sqrt{\delta} - 2b_{0} \leq 0.
		\end{equation*}

		Since
		\begin{equation}\label{b1_at_2_minus_2b0}
			b_{1}^{2} - 2b_{0} = \left( b_{1} -1 \right)^{2} + 1 > 0,
		\end{equation}
		then $b_{1}^{2} - 2b_{0} \leq -\sqrt{\delta}$, which is not possible.
	\end{proof}

	 Following \cite{cooke_grossman}, we look for $\vert\Omega_{+}\vert < 1$ in the three cases regarding the sign of $b_{0}^{2} - b_{2}^{2} < 0$.
	\begin{enumerate}
		\item $b_{0}^{2} - b_{2}^{2} < 0$. Adding $\left(b_{1}^{2} - 2b_{0} \right)^{2}$ on both sides of the expression $0 < -4\left( b_{0}^{2} - b_{2}^{2} \right)$ and taking the square root implies $b_{1}^{2} - 2b_{0} < \sqrt{\delta}$. Multiplying by $2b_{1}^{2}$, adding $4b_{2}^{2}$ on both sides, we have $\left(\dfrac{ b_{1} - \sqrt{\delta}}{2b_{2}}\right)^{2} < 1$. Taking the square root obtain $\vert \Omega_{+}\vert < 1$. In this way, 
\begin{equation}\label{omega0_hopf_bifurcation_b1_neq_0}
	\omega = \dfrac{\arccos\left(\Omega_{+}\right) + 2\pi j}{2\tau},
\end{equation}
with $j\in\mathbb{Z}$ and choosing $\arccos$ so that $\arccos\left(\Omega_{+}\right) + 2\pi j>0$. Using (\ref{gamma0_hopf_bifurcation_b1_neq_0}) notice that $b_{1}$ and $\sin\left(2\omega\tau \right)$ have the same sign. Therefore the branch of $\arccos$ depends of the sign of $b_{1}$.

		\item $b_{0}^{2} - b_{2}^{2} = 0$. In this case $\delta = \left( b_{1}^{2} - 2b_{0} \right)^{2}$, taking the square root and using (\ref{b1_at_2_minus_2b0}) we obtain $-b_{1}^{2} + \sqrt{\delta} = -2b_{0}$. By the hypothesis $b_{0}^{2}=b_{2}^{2}$ then $b_{0}\neq 0$. Thus $\dfrac{-b_{1}^{2} + \sqrt{\delta}}{2b_{0}} = -1$, which implies that $\Omega_{+} = \pm 1$, so $2\omega\tau = j\pi$ with $j\in\mathbb{N}$ and $\sin\left(2\omega\tau_{0}\right) = \sin(j\pi) = 0$. Using (\ref{eq:im_part_characteristic_equation}) we have $b_{1}=0$ which is not possible.
					
		\item $b_{0}^{2} - b_{2}^{2} > 0$. Adding $\left( b_{1}^{2} - 2b_{0} \right)^{2}$ on both sides of the expression $-4\left( b_{0}^{2} - b_{2}^{2}\right)<0$ and taking the square root implies $0 <b_{1}^{2} - 2b_{0} -\sqrt{\delta}$. Multiplying by $2b_{1}^{2}$ and adding $4b_{2}^{2}$ on both sides, we have $1 < \left(\dfrac{-b_{1}^{2} + \sqrt{\delta}}{2b_{2}}\right)^{2}$. Taking the square root we obtain $1 < \vert \Omega_{+}\vert$. This shows that in this case $\Omega_{+}$ does not lead to solutions.
	\end{enumerate}

	\item Case 2. $b_{1} = 0$. We have that $\cos(2\tau\omega) = \pm 1$. By (\ref{eq:re_part_characteristic_equation}) and if $b_{2}>1$ then for $k\in\mathbb{N}$,
		\begin{equation}\label{gamma0_omega0_hopf_bifurcation_b1_equal_0}
			\gamma = \dfrac{k\pi}{\tau\sqrt{b_{2}-1}}, \hspace{2mm} \omega = \dfrac{k\pi}{\tau}.		
		\end{equation}
\end{enumerate}

By implicit differentiation of $M$ with respect to $\gamma$ we obtain,
\begin{equation*}
	\dfrac{d\lambda}{d\gamma} = \dfrac{2\lambda^{2} + \gamma b_{1}\lambda}{2\gamma\tau\lambda^{2} + \left( 2\gamma + 2\gamma^{2}\tau b_{1}\right)\lambda + \left(\gamma^{2}b_{1} + 2\gamma^{3}\tau b_{0}\right)}.
\end{equation*}

Let $i\omega_{0}$ be a root of $M(\lambda,\gamma)=0$ associated with $\gamma=\gamma_{0}$. Observe that the denominator of the above expression is a cuadratic equation in the variable $\lambda$ with real coefficients, whose discriminant is $4\gamma^{2} + 4\gamma^{4}\tau^{2}\left(b_{0}^{2} - 1\right)^{2} > 0$, therefore $\lambda=i\omega_{0}$ is not a root of the denominator when $\gamma=\gamma_{0}$. Using (\ref{b1_at_2_minus_2b0}) we have,
\begin{equation}\label{Hopf_transversality}
\operatorname{Re}\left(\dfrac{d\lambda}{d\gamma}\right)\Bigg\vert\substack{\vspace{0mm}\lambda=i\omega_{0},\\	\hspace{-2mm}\vspace{-3mm}\gamma=\gamma_{0}\vspace{-3mm}}
=
\dfrac{4\tau\gamma_{0}\omega^{4}_{0} + 2\tau\gamma^{3}_{0}\omega^{2}_{0}\left( b_{1}^{2} - 2b_{0} \right)}{\left(\gamma^{2}_{0}b_{1} - 2\gamma_{0}\tau\omega^{2}_{0} + 2\gamma^{3}_{0}\tau b_{0} \right)^{2} + \left(2\gamma_{0}\omega_{0} + 2\gamma^{2}_{0}\tau b_{1}\omega_{0} \right)^{2}} > 0.
\end{equation}

We will show that $\lambda = i\omega_{0}$ is a simple root of $M\left(\lambda, \gamma \right)=0$ associated with $\gamma=\gamma_{0}$. Suppose that $\lambda_{0}$ is a zero of multiplicity $k+1$, with $k\geq 1$. Then there is $g_{0}$ analytic on a neihborhood $\mathcal{U}\subset\mathbb{C}$ of $\lambda_{0}$ such that $g_{0}\left(\lambda_{0} \right)\neq 0$ and $M\left(\lambda,\gamma_{0}\right) = \left(\lambda - \lambda_{0} \right)^{k+1} g_{0}\left(\lambda\right)$ with $\lambda\in\mathcal{U}$ then,
\begin{equation*}
\begin{split}
0
&=
\dfrac{dM\left(\lambda,\gamma_{0} \right)}{d\lambda}\Bigg\vert_{\lambda=\lambda_{0}} \\
&=
2\tau\lambda_{0}^{2} + 2\left(1 + \tau\gamma_{0}b_{1}\right)\lambda_{0} + \gamma_{0}\left(b1 + 2\tau\gamma_{0}b_{0} \right).
\end{split}
\end{equation*}

This quadratic equation has real coefficients. Using (\ref{b1_at_2_minus_4b0}) the discriminant is $4 + 4\tau^{2}\gamma_{0}^{2}\left(b_{1}^{2} - 4b_{0} \right)>0$ and thus $\lambda_{0}\in\mathbb{R}$, which is not possible. Therefore $\lambda = i\omega_{0}$ is a simple root of $M\left(\lambda,\gamma_{0}\right) = 0$.

We notice that $\lambda = n i\omega_{0}$ is not a root of $M\left(\lambda,\gamma_{0} \right) = 0$ for all $n\in\mathbb{Z}\setminus\left\lbrace\pm 1 \right\rbrace$. Observe that $i\omega_{0}$ and $n i\omega_{0}$ satisfies $\gamma_{0}^{4}b_{2}^{2} = \left(\omega_{0}^{2} - \gamma_{0}^{2}b_{0}\right)^{2} + \gamma_{0}^{2}b_{1}^{2}\omega_{0}^{2}$ and $\gamma_{0}^{4}b_{2}^{2} = \left(n^{2}\omega_{0}^{2} - \gamma_{0}^{2}b_{0}\right)^{2} + \gamma_{0}^{2}b_{1}^{2}n^{2}\omega_{0}^{2}$. Combining these two expressions we obtain $\left(n^{2} + 1 \right)\omega_{0}^{2} + \gamma_{0}^{2}\left(b_{1}^{2} - 2b_{0} \right) = 0$. However, we have that $\gamma_{0}^{2}\left(b_{1}^{2} - 2b_{0} \right)>0$ and $\left(n^{2} + 1 \right)\omega_{0}^{2}>0$. This contradicts the previous equation. Therefore we can formulate the following proposition.
\begin{proposition}\label{Hopf_bifurcation_theorem}
If $b_{1}\neq 0$ and $b_{0}^{2} - b_{2}^{2} < 0$, then system (\ref{DDE_GM_around_0}) undergoes a Hopf bifurcation, whose bifurcation values are $\gamma_{0}$ given by (\ref{gamma0_hopf_bifurcation_b1_neq_0}) and $\omega_{0}$ given by (\ref{omega0_hopf_bifurcation_b1_neq_0}). If $b_{1}=0$ and $b_{2}>1$ then system (\ref{DDE_GM_around_0}) undergoes a Hopf bifurcation, whose bifurcation values $\gamma_{0}$ and $\omega_{0}$ are given by (\ref{gamma0_omega0_hopf_bifurcation_b1_equal_0}). The transversality condition required is given by (\ref{Hopf_transversality}). Moreover, the root is a simple one and no other root is an integer multiple of $i\omega_{0}$.
\end{proposition}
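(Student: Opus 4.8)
The plan is to recognize the Proposition as the verification of the standard hypotheses for a Hopf bifurcation in a functional differential equation (see \cite{guo_wu}): at the candidate parameter $\gamma=\gamma_{0}$ one needs (i) a conjugate pair $\pm i\omega_{0}$ with $\omega_{0}>0$ of \emph{simple} purely imaginary roots of the characteristic equation $M(\lambda,\gamma)=0$ from (\ref{DDE_characteristic_equation}); (ii) the non-resonance condition that $ni\omega_{0}$ is not a root of $M(\cdot,\gamma_{0})$ for any integer $n$ with $|n|\neq 1$; and (iii) the transversality condition $\operatorname{Re}\bigl(d\lambda/d\gamma\bigr)\big|_{(i\omega_{0},\gamma_{0})}\neq 0$. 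All three items have in fact been prepared in the paragraphs preceding the statement, so the proof amounts to collecting those facts in the right order and invoking the Hopf theorem.

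First I would settle existence and simplicity. Writing $M(i\omega,\gamma)=0$ as its real and imaginary parts (\ref{eq:re_part_characteristic_equation})--(\ref{eq:im_part_characteristic_equation}): when $b_{1}\neq 0$ one solves (\ref{eq:im_part_characteristic_equation}) for $\gamma$, obtaining (\ref{gamma0_hopf_bifurcation_b1_neq_0}), substitutes into (\ref{eq:re_part_characteristic_equation}), and is left with a quadratic for $\Omega=\cos(2\omega\tau)$ whose discriminant is nonnegative by (\ref{b1_at_2_minus_4b0}). The preceding Lemma eliminates the root $\Omega_{-}$, and the three sub-cases according to the sign of $b_{0}^{2}-b_{2}^{2}$ show that $|\Omega_{+}|<1$ holds \emph{precisely} in the regime $b_{0}^{2}-b_{2}^{2}<0$; this produces an admissible $\omega_{0}$ through (\ref{omega0_hopf_bifurcation_b1_neq_0}) and then $\gamma_{0}$ through (\ref{gamma0_hopf_bifurcation_b1_neq_0}), the branch of $\arccos$ being fixed by the sign of $b_{1}$ so that $\gamma_{0},\omega_{0}>0$. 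When $b_{1}=0$ with $b_{2}>1$ one reads $(\gamma_{0},\omega_{0})$ directly off (\ref{gamma0_omega0_hopf_bifurcation_b1_equal_0}). For simplicity I would argue by contradiction: a multiple root $\lambda_{0}$ would force $dM/d\lambda=0$ there, and that derivative is a quadratic polynomial in $\lambda_{0}$ with real coefficients whose discriminant is positive by (\ref{b1_at_2_minus_4b0}), so $\lambda_{0}$ would be real---impossible for $\lambda_{0}=i\omega_{0}$ with $\omega_{0}>0$.

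Next I would rule out resonance: if some $ni\omega_{0}$ with $n\in\mathbb{Z}\setminus\{\pm 1\}$ were a root of $M(\cdot,\gamma_{0})$, then writing the modulus identity $\gamma_{0}^{4}b_{2}^{2}=(\omega^{2}-\gamma_{0}^{2}b_{0})^{2}+\gamma_{0}^{2}b_{1}^{2}\omega^{2}$ (obtained by squaring and adding (\ref{eq:re_part_characteristic_equation})--(\ref{eq:im_part_characteristic_equation})) at $\omega=\omega_{0}$ and at $\omega=n\omega_{0}$ and subtracting, one may divide by the nonzero factor $(n^{2}-1)\omega_{0}^{2}$, leaving $(n^{2}+1)\omega_{0}^{2}+\gamma_{0}^{2}(b_{1}^{2}-2b_{0})=0$, which is impossible since $b_{1}^{2}-2b_{0}=(b_{1}-1)^{2}+1>0$ by (\ref{b1_at_2_minus_2b0}). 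Finally, implicit differentiation of $M$ in $\gamma$ and evaluation at $(i\omega_{0},\gamma_{0})$ gives (\ref{Hopf_transversality}); its denominator is a sum of squares that is nonzero because the associated quadratic in $\lambda$ (the denominator of $d\lambda/d\gamma$) has real coefficients and positive discriminant and hence does not have $i\omega_{0}$ as a root, while its numerator $4\tau\gamma_{0}\omega_{0}^{4}+2\tau\gamma_{0}^{3}\omega_{0}^{2}(b_{1}^{2}-2b_{0})$ is strictly positive, once more by (\ref{b1_at_2_minus_2b0}); thus $\operatorname{Re}\bigl(d\lambda/d\gamma\bigr)\big|_{(i\omega_{0},\gamma_{0})}>0$. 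Combining (i)--(iii), the Hopf bifurcation theorem yields the bifurcating branch of periodic solutions of (\ref{DDE_GM_around_0}), which proves the Proposition. The only genuinely delicate step is the existence analysis---isolating the sign regime $b_{0}^{2}-b_{2}^{2}<0$ that makes $|\Omega_{+}|<1$, since $|\Omega_{-}|>1$ always holds and the boundary case $b_{0}^{2}=b_{2}^{2}$ must be excluded separately; by comparison the simplicity, non-resonance and transversality claims all collapse to the elementary positivity of the perfect-square expressions in (\ref{b1_at_2_minus_4b0}) and (\ref{b1_at_2_minus_2b0}).
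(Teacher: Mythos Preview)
Your proposal is correct and follows essentially the same route as the paper: the paper establishes the Proposition by the analysis preceding it, treating existence of the pair $\pm i\omega_{0}$ via the quadratic in $\Omega=\cos(2\omega\tau)$ and the sign of $b_{0}^{2}-b_{2}^{2}$, simplicity via the positive discriminant of $dM/d\lambda$, non-resonance via the modulus identity and the factorization through $(n^{2}-1)\omega_{0}^{2}$, and transversality via (\ref{Hopf_transversality}) together with (\ref{b1_at_2_minus_2b0}). Your organization into the three Hopf hypotheses (i)--(iii) and your explicit mention of dividing by $(n^{2}-1)\omega_{0}^{2}$ make the structure slightly cleaner, but the arguments are the same.
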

\section{Poincar\'e - Lindstedt Series for a periodic solution}
We proceed to construct an approximation to the bifurcating periodic solution using the Poincar\'e-Lindstedt series. The idea is to develop order-by-order calculations for the coefficients solving linear recursive equations.

Given that we are interested in finding an analytic periodic solution of (\ref{DDE_GM_en formato_f}), suppose that this system has a periodic solution $x$ of period $T>0$ and frecuency $\omega = \frac{2\pi}{T}$. Defining $y(t) = x\left(\frac{t}{\omega}\right)$ then $x$ is $T$-periodic if and only if $y$ is $2\pi$-periodic and system (\ref{DDE_GM_en formato_f}) is equivalent to
\begin{equation}\label{DDE_GM_en_formato_f_delays_w_s0w}
\omega y'(t) = 
\gamma
f
\left(
y\left(t\right),
y\left(t-\omega\right), 
y\left(t-s_{0}\omega\right)
\right).
\end{equation}
Since periodic solutions of analytic delay differential equations are analytics \cite{nussbaum_73}, we look for the solution of (\ref{DDE_GM_en_formato_f_delays_w_s0w}) in the form of a perturbative series,
\begin{equation}\label{y_U_V_perturbed}
	y(t,\varepsilon)
	:=
	\begin{pmatrix}
		U(t,\varepsilon)	\\
		V(t,\varepsilon)
	\end{pmatrix}
	=
	\sum\limits_{k=0}^{\infty}
	\begin{pmatrix}
		U_{k}(t)	\\
		V_{k}(t)
	\end{pmatrix}
	\varepsilon^{k}
	=:
	\sum\limits_{k=0}^{\infty} y_{k}(t)\varepsilon^{k},
\end{equation}
where $\varepsilon$ is a small positive number and $U(\cdot,\varepsilon)$, $V(\cdot,\varepsilon)$ are $2\pi$-periodic analytic functions. We observe that $y(\cdot,\varepsilon)$ (which is $2\pi$-periodic) belongs to $\mathcal{C}_{2, s_{0}\omega}$ (see \textbf{Remark} \textbf{\ref{remark_formal_definition_of_C_n_tau}}) and its condition of being a periodic function allows us to obtain the equality of the \emph{initial function segment} $y(\cdot, \varepsilon)$ and the \emph{final function segment} $y(2\pi+\cdot, \varepsilon)$, i.e. 
$y(\theta, \varepsilon) = y(2\pi + \theta, \varepsilon)$ for $\theta\in\left[-s_{0}\omega, 0\right]$, which is a periodicity condition. The periodic solutions of the nonlinear equation (\ref{DDE_GM_en_formato_f_delays_w_s0w}) have periods depending on the parameters $\gamma$ and $\omega$. Hence we perturb both the parameter $\gamma$ and the frequency $\omega$,
\begin{equation}\label{gamma_and_omega_perturbed}
	\gamma(\varepsilon) = \sum\limits_{k=0}^{\infty} \gamma_{k}\varepsilon^{k},\hspace{2mm} \omega(\varepsilon) = \sum\limits_{k=0}^{\infty} \omega_{k}\varepsilon^{k}.
\end{equation}
Substituting these expansions into system (\ref{DDE_GM_en_formato_f_delays_w_s0w}) and using (\ref{y_U_V_perturbed}) and (\ref{gamma_and_omega_perturbed}), we obtain a system of the form, 
\begin{equation}\label{DDE_GM_en_formato_f_delays_w_s0w_perturbed}
\omega(\varepsilon)\partial_{t}\left(y(t,\varepsilon) \right)
=
\gamma(\varepsilon)f\left(y(t,\varepsilon), y\left(t-\omega(\varepsilon),\varepsilon \right), y\left(t-s_{0}\omega(\varepsilon),\varepsilon \right) \right).
\end{equation}

In order to find a periodic solution of system (\ref{DDE_GM_en_formato_f_delays_w_s0w_perturbed}) we formulate the following proposition that allows us to approximate  $y(\cdot, \varepsilon)$. The main idea is to solve (\ref{DDE_GM_en_formato_f_delays_w_s0w_perturbed}) expanding in powers of $\varepsilon^{k}$, $k\geq 0$, and equating the coefficients of the same power. The proposition contains some statements whose proofs are included.
%
\begin{proposition}\label{proposition_for_finding_a_periodic_solution_at_each_order_epsilon_power_k}
Equation (\ref{DDE_GM_en_formato_f_delays_w_s0w_perturbed}) has a $2\pi$-periodic solution for each order $\varepsilon^{k}$, for $k\in\mathbb{N}$.
\end{proposition}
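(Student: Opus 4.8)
The plan is to prove the statement by induction on $k$, building at the $k$-th stage the $2\pi$-periodic coefficient $y_k$ in \eqref{y_U_V_perturbed} together with the two scalars $\gamma_{k-1}$, $\omega_{k-1}$ in \eqref{gamma_and_omega_perturbed}. First I would substitute \eqref{y_U_V_perturbed}--\eqref{gamma_and_omega_perturbed} into \eqref{DDE_GM_en_formato_f_delays_w_s0w_perturbed}, expand the quotient $1/\bigl(V(t-s_0\omega(\varepsilon),\varepsilon)+v_0\bigr)$ as a geometric series in $V/v_0$ (a legitimate \emph{formal} power series, since $V_0\equiv 0$), and expand each delayed argument $y_j\bigl(t-\omega(\varepsilon)\bigr)$, $y_j\bigl(t-s_0\omega(\varepsilon)\bigr)$ in powers of $\varepsilon$ by Taylor's formula. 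Collecting the coefficient of $\varepsilon^{k}$ and using that $y_0\equiv 0$ is the equilibrium and $f(0,0,0)=0$, one checks that the terms carrying $\gamma_k$ and $\omega_k$ cancel, so the identity at order $\varepsilon^{k}$ reads
\[
\mathcal{M}_0\,y_k = R_k,\qquad
\mathcal{M}_0\phi(t):=\omega_0\phi'(t)-\gamma_0\bigl[A\phi(t)+B_1\phi(t-\omega_0)+B_2\phi(t-s_0\omega_0)\bigr],
\]
where $R_k$ is an explicit $2\pi$-periodic function of $y_0,\dots,y_{k-1}$, $\gamma_0,\dots,\gamma_{k-1}$, $\omega_0,\dots,\omega_{k-1}$, depending affinely on the ``new'' unknowns $\gamma_{k-1}$ and $\omega_{k-1}$. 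A point to be verified inside the induction is that only finitely many terms of the geometric and Taylor expansions feed into $R_k$, so that if $y_0,\dots,y_{k-1}$ are trigonometric polynomials then so is $R_k$; this is precisely where the non-polynomial form of $f$ causes no trouble.

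Next I would pass to Fourier series, $\phi(t)=\sum_{n\in\mathbb{Z}}\hat\phi_n e^{int}$, on which $\mathcal{M}_0$ acts diagonally: $\mathcal{M}_0\bigl(\hat\phi_n e^{int}\bigr)=\Delta(in\omega_0,\gamma_0)\hat\phi_n e^{int}$, with $\Delta$ the characteristic matrix of Section \ref{hopf_bif_section} and $\det\Delta(\lambda,\gamma_0)=M(\lambda,\gamma_0)$, $M$ as in \eqref{DDE_characteristic_equation}. By Proposition \ref{Hopf_bifurcation_theorem}, $in\omega_0$ is a zero of $M(\cdot,\gamma_0)$ only for $n=\pm1$; hence for every $n\neq\pm1$ the matrix $\Delta(in\omega_0,\gamma_0)$ is invertible and the $n$-th mode of $y_k$ is forced, $\hat y_{k,n}=\Delta(in\omega_0,\gamma_0)^{-1}\hat R_{k,n}$, with only finitely many $n$ to treat. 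For $n=\pm1$, simplicity of the root (again Proposition \ref{Hopf_bifurcation_theorem}) gives $\ker\Delta(i\omega_0,\gamma_0)=\operatorname{span}\{\xi\}$ and a left null vector $\psi^{*}$ with $\psi^{*}\Delta_\lambda(i\omega_0,\gamma_0)\xi\neq0$, where $\Delta_\lambda=\partial_\lambda\Delta$; then $\Delta(i\omega_0,\gamma_0)\hat y_{k,1}=\hat R_{k,1}$ is solvable exactly when $\psi^{*}\hat R_{k,1}=0$, the mode $n=-1$ being the conjugate statement because $R_k$ is real-valued.

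The heart of the proof is arranging this single complex solvability condition by the choice of $\gamma_{k-1}$, $\omega_{k-1}$. A direct computation, using $y_1(t)=\xi e^{it}+\bar\xi e^{-it}$, shows that the parts of $\hat R_{k,1}$ proportional to $\gamma_{k-1}$ and to $\omega_{k-1}$ are (up to a fixed overall factor) $-\gamma_{k-1}\Delta_\gamma(i\omega_0,\gamma_0)\xi$ and $-i\,\omega_{k-1}\Delta_\lambda(i\omega_0,\gamma_0)\xi$, with $\Delta_\gamma=\partial_\gamma\Delta$. Writing $z:=\psi^{*}\Delta_\lambda(i\omega_0,\gamma_0)\xi\neq0$ and recalling $\dfrac{d\lambda}{d\gamma}=-\dfrac{\psi^{*}\Delta_\gamma(i\omega_0,\gamma_0)\xi}{z}$, the condition $\psi^{*}\hat R_{k,1}=0$ becomes $z\bigl(-\tfrac{d\lambda}{d\gamma}\,\gamma_{k-1}+i\,\omega_{k-1}\bigr)=\psi^{*}\hat R_{k,1}^{(0)}$, where $\hat R_{k,1}^{(0)}$ gathers the terms not involving $\gamma_{k-1},\omega_{k-1}$. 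Since $\operatorname{Re}\bigl(\tfrac{d\lambda}{d\gamma}\bigr)\neq0$ by the transversality condition \eqref{Hopf_transversality}, splitting this equality into real and imaginary parts yields a unique real pair $(\gamma_{k-1},\omega_{k-1})$. With solvability secured, $\hat y_{k,1}$ is determined up to a scalar multiple of $\xi$; I would fix that last degree of freedom (amplitude and phase) by the normalization that defines $\varepsilon$---prescribing the $e^{\pm it}$-component of $y_k$ for $k\ge2$, the case $k=1$ being the trivial one where $R_1=0$ and $y_1$ is the chosen generator of $\ker\mathcal{M}_0$. Then $y_k$ is again a trigonometric polynomial and the induction closes; the base $k=0$ is $y_0\equiv0$, $\gamma=\gamma_0$, $\omega=\omega_0$ from Proposition \ref{Hopf_bifurcation_theorem}.

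I expect the main obstacle to be exactly this solvability step and the non-degeneracy that lets $(\gamma_{k-1},\omega_{k-1})$ absorb the two real conditions: it rests entirely on converting the two facts already established in Section \ref{hopf_bif_section}---that $i\omega_0$ is a \emph{simple} root and that the transversality condition \eqref{Hopf_transversality} holds---into the statements $\psi^{*}\Delta_\lambda(i\omega_0,\gamma_0)\xi\neq0$ and $\operatorname{Re}(d\lambda/d\gamma)\neq0$, which are precisely what make the $2\times2$ real system for $(\gamma_{k-1},\omega_{k-1})$ uniquely solvable at every order. A secondary, essentially bookkeeping difficulty is making the extraction of $R_k$ rigorous in the presence of the quotient nonlinearity and the $\varepsilon$-dependent delays; the crucial observation, to be checked by induction, is that only finitely many terms contribute at each fixed order, keeping the recursion inside the trigonometric polynomials and effectively computable.
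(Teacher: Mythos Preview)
Your proposal is correct and follows essentially the same architecture as the paper's proof: induction on $k$, Fourier decomposition, the observation that $\Delta(in\omega_0,\gamma_0)$ is invertible for $|n|\neq1$ (so only finitely many modes need attention and each $y_k$ is a trigonometric polynomial), and a Fredholm-type solvability condition at $n=\pm1$ that is arranged by choosing $(\gamma_{k-1},\omega_{k-1})$.

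The one genuine difference is how you establish the non-degeneracy of the $2\times2$ real system for $(\gamma_{k-1},\omega_{k-1})$. The paper writes this system with an explicit matrix $C$ and computes $\det C$ by brute force, finding it strictly negative. You instead recognize that the $(\gamma_{k-1},\omega_{k-1})$-dependent piece of $\hat R_{k,1}$ is exactly $-\bigl[\gamma_{k-1}\Delta_\gamma(i\omega_0,\gamma_0)+i\,\omega_{k-1}\Delta_\lambda(i\omega_0,\gamma_0)\bigr]\xi$, pair with $\psi^{*}$, and reduce invertibility to the two facts already proved in Section~\ref{hopf_bif_section}: simplicity of $i\omega_0$ (giving $\psi^{*}\Delta_\lambda\xi\neq0$) and transversality \eqref{Hopf_transversality} (giving $\operatorname{Re}(d\lambda/d\gamma)\neq0$). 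This is cleaner and makes transparent \emph{why} the linear system is always solvable, whereas the paper's explicit determinant calculation, while concrete, somewhat hides the connection to the Hopf hypotheses. On the other hand, the paper's explicit recursions for the quotient nonlinearity (the coefficients $d_k$ via automatic differentiation) are more immediately implementable than your geometric-series description, which is the paper's stated goal. Both routes are valid; yours is conceptually tidier, the paper's is computationally explicit.
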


\begin{proof}

At order $\varepsilon^{0}$, given that (\ref{DDE_GM_around_0}) is centered at the origin, then $y_{0}=0$, where $\gamma_{0}$ and $\omega_{0}$ are the values at the Hopf Bifurcation in Proposition \ref{Hopf_bifurcation_theorem}.

At order $\varepsilon^{1}$ we have,
\begin{equation}\label{DDE_GM_en_formato_f_delays_w_s0w_perturbed_at_order_epsilon_1}  
\omega_{0}y_{1}'(t) - \gamma_{0}L\left(y_{1}(t), y_{1}\left(t-\omega_{0}\right), y_{1}\left(t-s_{0}\omega_{0}\right)\right)
=
0,
\end{equation}
which is the linealization of (\ref{DDE_GM_en_formato_f_delays_w_s0w_perturbed}) around $0$. Expanding $y_{1}$ in Fourier series yeilds,
\begin{equation*}
	y_{1}(t)
	:=
	\begin{pmatrix}
		U_{1}(t)	\\
		V_{1}(t)
	\end{pmatrix}
	=
	\sum\limits_{n\in\mathbb{Z}}
	\begin{pmatrix}
		\hat{U}_{1}(n)	\\
		\hat{V}_{1}(n)
	\end{pmatrix}
	e^{i n t}
	=:
	\sum\limits_{n\in\mathbb{Z}}\hat{y}_{1}(n)e^{i n t},
\end{equation*}

with $\hat{y}_{1}(n) = \operatorname{conj}\left(\hat{y}_{1}(-n) \right) \in\mathbb{C}^{2}$ for all $n\in\mathbb{Z}$, we obtain that (\ref{DDE_GM_en_formato_f_delays_w_s0w_perturbed_at_order_epsilon_1}) is equivalent to the following system for each $n\in\mathbb{Z}$,
\begin{equation*}
\Delta\left(n i\omega_{0}, \gamma_{0}\right)\hat{y}_{1}(n)
=
0.
\end{equation*}

We notice that $\det\Delta(n i \omega_{0},\gamma_{0})\neq 0$ for $\vert n\vert\neq 1$. Therefore $\hat{y}_{1}(n) = 0$ wherever $n$ is different from $\pm 1$. For $n = 1$ we observe that $\Delta (i\omega_{0},\gamma_{0})$ is singular, so $\hat{y_{1}}(1) \in \ker\Delta(i\omega_{0},\gamma_{0})$. For simplicity we choose, 
\begin{equation}\label{y_1_1_hat}
\hat{y}_{1}(1)
= 
\begin{pmatrix}
	i\omega_{0} + \gamma_{0}	\\	\\
	2\gamma_{0}u_{0} e^{-i\omega_{0}}
\end{pmatrix}.
\end{equation}

Following \textbf{Remark \ref{remark_formal_definition_of_C_n_tau}}, the formal adjoint equation associated with the linear equation (\ref{DDE_GM_en_formato_f_delays_w_s0w_perturbed_at_order_epsilon_1}) is given by
\begin{equation}\label{lineal_adjoint_equation}
\omega_{0}\psi'(t) + \gamma_{0}\left[\psi(t) A + \psi\left(t + \omega_{0}\right)B_{1} + \psi\left(t + s_{0}\omega_{0}\right)B_{2}\right]
=
0,
\end{equation}
for $\psi(t)\in\mathbb{R}^{2*}$. Developing $\psi$ in Fourier series
$
\psi(t)
=
\sum\limits_{n\in\mathbb{Z}}\hat{\psi}(n)e^{i n t}
$
of (\ref{lineal_adjoint_equation}), with $\hat{\psi}(n) = \operatorname{conj}\left(\hat{\psi}(-n)\right)\in\mathbb{C}^{2*}$ for all $n\in\mathbb{Z}$. we obtain that (\ref{lineal_adjoint_equation}) is equivalent to solve the following system for each $n\in\mathbb{Z}$ 
\begin{equation*}
\hat{\psi}(n)\Delta(-n i\omega_{0},\gamma_{0})
=
0,
\end{equation*}
and thus $\hat{\psi}(n) = 0$ for all $\lvert n \rvert \neq 1$. For $n = -1$ just choose $\hat{\psi}(-1) \in \ker^{*}\Delta(i\omega_{0},\gamma_{0})$. For simplicity we choose
\begin{equation*}
\hat{\psi}(-1)
=
\begin{pmatrix}
i\omega_{0}+\gamma_{0},	&
-\gamma_{0}\dfrac{u_{0}^{2}}{v_{0}^{2}}e^{-i\omega_{0}s_{0}}
\end{pmatrix}.
\end{equation*}

We consider the following lemma
\begin{lemma}\label{Delta_minus_1_tends_to_0}
	\begin{equation*}
		\lim\limits_{\vert n\vert \rightarrow\infty}\big\Vert \Delta^{-1}\left(n i \omega_{0},\gamma_{0}\right)\big\Vert_{F} = 0,
	\end{equation*}
	where $\Vert \cdot \Vert_{F}$ is the Frobenius norm.
\end{lemma}

\begin{proof}
	Notice that for $\vert n\vert \neq 1$ we have
	\begin{equation*}
		\big\Vert \Delta^{-1}\left(n i \omega_{0},\gamma_{0}\right)\big\Vert^{2}_{F}
		=
		\dfrac{2\omega_{0}^{2}n^{2}+r_{0}}{\omega_{0}^{4}n^{4} + a_{2}(n)n^{2} + a_{1}(n)n + a_{0}(n)},
	\end{equation*}
	where
	\begin{equation*}
		\left.
			\begin{array}{rcl}
				a_{2}(n) 
				& = 
				& \gamma_{0}^{2}b_{1}^{2}\omega_{0}^{2} - 2\gamma_{0}^{2}\omega_{0}^{2}\left[b_{0} + b_{2}\cos\left(2\tau\omega_{0}n\right) \right], \\ \\
				a_{1}(n) 
				& = 
				& -2\gamma_{0}^{3}b_{1}b_{2}\omega_{0}\sin\left( 2\tau\omega_{0}n \right), \\ \\
				a_{0}(n)
				& =
				& \gamma_{0}^{4}b_{2}^{2}\sin^{2}\left( 2\tau\omega_{0}n \right) + \gamma_{0}^{4}\left[b_{0} + b_{2}\cos\left(2\tau\omega_{0}n\right) \right]^{2}, \\ \\
				r_{0}
				& =
				& \gamma_{0}^{2} + \gamma_{0}^{2}\dfrac{u^{4}}{v^{4}} + 4\gamma_{0}^{2}u_{0}^{2} + \gamma_{0}^{2}b_{0}^{2}.
			\end{array}
		\right.
	\end{equation*}
	
	We observe that sequences $a_{2}(n), a_{1}(n)$ and $a_{0}(n)$ are bounded. Given that $\big\Vert \Delta^{-1}\left(-n i \omega_{0},\gamma_{0}\right)\big\Vert_{F} = \big\Vert \Delta^{-1}\left(n i \omega_{0},\gamma_{0}\right)\big\Vert_{F}$, then without loss of generality we suppose that $n\in\mathbb{N}$. In this form, let $\varepsilon>0$. By the Archimedean property there is $n_{1}\in\mathbb{N}$ such that for $n\geq n_{1}$ we have $2\omega_{0}^{2}n^{2} + r_{0} < \varepsilon\omega_{0}^{4}n^{4} + \varepsilon\left[ a_{2}(n)n^{2} + a_{1}(n)n + a_{0}(n) \right]$, which is equivalent to
	\begin{equation*}
		\big\Vert \Delta^{-1}\left(n i \omega_{0},\gamma_{0}\right)\big\Vert_{F} < \sqrt{\varepsilon}.
	\end{equation*}
\end{proof}

Considering the operator $\Pi_{k}:\mathcal{C}^{1}\left(\mathbb{R},\mathbb{R}^{2}\right)\rightarrow\mathcal{C}\left(\mathbb{R},\mathbb{R}^{2}\right)$, $k\in\mathbb{N}_{0}$, defined by (see \cite{guo_wu})
\begin{equation*}
	\left(\Pi_{k}\varphi\right)(t)
	=
	\omega_{k}\varphi'(t) - \gamma_{k}L(\varphi(t), \varphi(t-\omega_{0}), \varphi(t - s_{0}\omega_{0})),
\end{equation*}
and for simplicity $\Pi = \Pi_{0}$, we have the following proposition (see for example \cite{gopalsamy1996395}, \cite{hale_verduyn93}, \cite{hale71})
\begin{proposition}\label{proposition_for_finding_periodic_solutions}
Consider the equation
	\begin{equation}\label{Pi_eq_R}
		\left(\Pi\varphi\right)(t)
		=
		R(t),
	\end{equation}
	where $R$ is real $2\pi$-periodic and expanding on Fourier series, $R(t) = \sum\limits_{n\in\mathbb{Z}}\hat{R}(n)e^{i n t}$. The following statemets are equivalent
	\begin{itemize}
		\item[(a)] $\hat{R}(1)\in \operatorname{Range}\Delta\left(i\omega_{0}, \gamma_{0}\right)$.
		\item[(b)] Equation (\ref{Pi_eq_R}) has at least one real non constant $2\pi$-periodic solution.
		\item[(c)] $\hat{\psi}(-1)\hat{R}(1) = 0$.
	\end{itemize}
\end{proposition}

\begin{proof}
We prove $(a)\Rightarrow(b)\Rightarrow(c)\Rightarrow(a)$.

$(a)\Rightarrow(b)$ Looking for a real $2\pi$-periodic solution, we consider the \textit{ansatz} $\varphi(t) = \sum\limits_{n\in\mathbb{Z}} \hat{\varphi}(n) e^{int} $, $\hat{\varphi}(n) = \operatorname{conj}\left(\hat{\varphi}(-n)\right) \in\mathbb{C}^{2}$ for all $n\in\mathbb{Z}$. Then equation (\ref{Pi_eq_R}) implies solve for each $n\in\mathbb{Z}$
\begin{equation*}
\Delta\left(ni\omega_{0},\gamma_{0}\right)\hat{\varphi}(n) = \hat{R}(n),
\end{equation*}
obtaining that $\hat{\varphi}(n) = \Delta^{-1}\left(ni\omega_{0},\gamma_{0}\right)\hat{R}(n)$ for $\lvert n \rvert \neq 1$. For $n=1$, given that $ \dim\left(\ker\Delta\left(i\omega_{0},\gamma_{0}\right)\right) = 1$ then $\operatorname{Range}\Delta\left(i\omega_{0},\gamma_{0}\right) = \operatorname{span}_{\mathbb{C}}\left\lbrace \Delta\left(i\omega_{0},\gamma_{0}\right)\textbf{e}_{1} \right\rbrace $. By hypothesis there is $c_{1}\in\mathbb{C}$ such that $\hat{R}(1) = c_{1}\Delta\left(i\omega_{0},\gamma_{0}\right)\textbf{e}_{1}$, then we choose $\hat{\varphi}(1) = c_{1}\textbf{e}_{1}$. By Lemma \ref{Delta_minus_1_tends_to_0} we have that $\big\Vert\Delta^{-1}\left(n i \omega_{0}, \gamma_{0} \right)\big\Vert_{F}$ is bounded, then $\hat{\varphi}(n)$ decreases exponentially, obtaining at least one real non constant $2\pi$-periodic solution.

$(b)\Rightarrow(c)$ Suppose that $\varphi(t) = \sum\limits_{n\in\mathbb{Z}} \hat{\varphi}(n) e^{int}$ with $\hat{\varphi}(n) = \operatorname{conj}\left(\hat{\varphi}(-n)\right) \in\mathbb{C}^{2}$ for all $n\in\mathbb{Z}$, is a real non constant $2\pi$-periodic solution of (\ref{Pi_eq_R}), then for $n=1$ we have $\Delta\left(i\omega_{0},\gamma_{0}\right)\hat{\varphi}(1) = \hat{R}(1)$. Left multiplying by $\hat{\psi}(-1)$ we get 
\begin{equation*}
\hat{\psi}(-1) \hat{R}(1)
=
\hat{\psi}(-1) \Delta\left(i\omega_{0},\gamma_{0}\right)\hat{\varphi}(1) \\
=
0.
\end{equation*}

$(c)\Rightarrow(a)$ $\hat{\psi}(-1)
\begin{pmatrix}
	\hat{R}^{(1)}(1)	\\
	\hat{R}^{(2)}(1)
\end{pmatrix}=0$ implies that $\hat{R}^{(2)}(1) = \dfrac{i\omega_{0} + \gamma_{0}}{\gamma_{0}\frac{u^{2}_{0}}{v^{2}_{0}} e^{-i\omega_{0}s_{0}}} \hat{R}^{(1)}(1)$. Thus
\begin{equation*}
\hat{R}(1)\in \operatorname{span}_{\mathbb{C}}
			\left\lbrace
 			\begin{pmatrix}
				1	\\	\\
         		\dfrac{i\omega_{0} + \gamma_{0}}{\gamma_{0}\frac{u^{2}_{0}}{v^{2}_{0}} e^{-i\omega_{0}s_{0}}}
			\end{pmatrix}
			\right\rbrace
			=
			\operatorname{span}_{\mathbb{C}}
			\left\lbrace
 			\begin{pmatrix}
				\gamma_{0}\frac{u^{2}_{0}}{v^{2}_{0}} e^{-i\omega_{0}s_{0}}	\\	\\
         		i\omega_{0} + \gamma_{0}
			\end{pmatrix}
			\right\rbrace
			=
			\operatorname{span}_{\mathbb{C}}
			\left\lbrace
				\Delta\left(i\omega_{0},\gamma_{0} \right)\textbf{e}_{2} 			
 			\right\rbrace
 			=
 			\operatorname{Imag}\Delta\left(i\omega_{0},\gamma_{0} \right).
\end{equation*}
\end{proof}

At order $\varepsilon^{2}$ we have,
\begin{equation}\label{PerSol_at_order_epsilon_2}
\left(\Pi y_{2}\right)(t)
=
R_{2}(t),
\end{equation}
where $R_{2}(t)
	=
	-
	\left\lbrace\left(\Pi_{1}y_{1}\right)(t) + \gamma_{0}\omega_{1}\left[B_{1}y_{1}'\left(t - \omega_{0}\right) + B_{2}y_{1}'\left(t - s_{0}\omega_{0}\right) \right]\right\rbrace
	+
	\gamma_{0}G_{2}(t)
$, $G_{2}$ is defined by $G_{2}(t) = \left(G^{(1)}_{2}(t), G^{(2)}_{2}(t)\right)^{T}$ with $G^{(1)}_{2}(t) = \dfrac{1}{v_{0}}\left[U_{1}(t) - \dfrac{u_{0}}{v_{0}}V_{1}\left(t - s_{0}\omega_{0}\right)\right]^{2}$ and $G^{(2)}_{2}(t) = U_{1}^{2}\left(t - \omega_{0}\right)$.

Expanding (\ref{PerSol_at_order_epsilon_2}) in Fourier series, we have that for each $n\in\mathbb{Z}$,
\begin{equation*}
\Delta\left(n i\omega_{0}, \gamma_{0}\right)\hat{y}_{2}(n)
=
\hat{R}_{2}(n),
\end{equation*}
with $\hat{R}_{2}(n)=
	-
	\left[\tilde{\Delta}\left(n i\omega_{1}, \gamma_{1}, n i\omega_{0}\right) + \gamma_{0}n i\omega_{1}\left(e^{-n i\omega_{0}}B_{1} + s_{0}e^{-n i\omega_{0}s_{0}}B_{2}\right)\right]\hat{y}_{1}(n)
	+
	\gamma_{0}
	\hat{G}_{2}(n)
	$, where $\hat{G}_{2}(n)$ is defined by $\hat{G}_{2}(n)=\left(\hat{G}^{(1)}_{2}(n), \hat{G}^{(2)}_{2}(n)\right)^{T}$. Here $\hat{G}^{(1)}_{2}(n)$ and $\hat{G}^{(2)}_{2}(n)$ are written as follows,
\begin{equation*}
\hat{G}^{(1)}_{2}(n) = \dfrac{1}{v_{0}}\sum\limits_{\substack{n_{1} + n_{2} = n\\ n_{1,2}\in\mathbb{Z}}} \left[\hat{U}_{1}(n_{1}) - \frac{u_{0}}{v_{0}}\hat{V}_{1}(n_{1}) e^{-n_{1}i\omega_{0}s_{0}} \right] \left[\hat{U}_{1}(n_{2}) - \frac{u_{0}}{v_{0}}\hat{V}_{1}(n_{2}) e^{-n_{2}i\omega_{0}s_{0}} \right],
\end{equation*}	
and $\hat{G}^{(2)}_{2}(n)
=
e^{-n i\omega_{0}}\sum\limits_{\substack{n_{1} + n_{2} = n\\ n_{1,2}\in\mathbb{Z}}}
\hat{U}_{1}(n_{1})\hat{U}_{1}(n_{2})$.
In this form we observe that $\hat{R}_{2}(n) = 0$ for $\vert n\vert \geq 3$.
	
For $n = 2$ we obtain,
\begin{equation*}
\hat{R}_{2}(2)
=
\gamma_{0}
\begin{pmatrix}
	\dfrac{1}{v_{0}}\left[\hat{U}_{1}(1) - \frac{u_{0}}{v_{0}}\hat{V}_{1}(1)e^{-i\omega_{0}s_{0}}\right]^{2}\\ \\
	e^{-2i\omega_{0}}\hat{U}_{1}^{2}\left(1\right)
\end{pmatrix},
\end{equation*}
and for $n = 0$ we obtain,
\begin{equation*}
\hat{R}_{2}(0)
=
2\gamma_{0}
\begin{pmatrix}
	\dfrac{1}{v_{0}}\bigg\vert \hat{U}_{1}(1) - \dfrac{u_{0}}{v_{0}}\hat{V}_{1}(1)e^{-i\omega_{0}s_{0}}\bigg\vert^{2}	\\ \\
	\bigg\vert\hat{U}_{1}\left(1\right)\bigg\vert^{2}
\end{pmatrix}.
\end{equation*}

Therefore, $\hat{y}_{2}(n) = 0$ for $\vert n\vert\geq 3$ and $\hat{y}_{2}(n)=\Delta^{-1}(ni\omega_{0},\gamma_{0})\hat{R}_{2}(n)$ for $n \in \left\lbrace 0, 2\right\rbrace$.

For $n = 1$, we observe that $\hat{\psi}(-1) \hat{R}_{2}(1)=0$ implies that,
\begin{equation*}
\begin{split}
0
&=
\hat{\psi}(-1) \left[\tilde{\Delta}\left(i\omega_{1}, \gamma_{1}, i\omega_{0}\right) + \gamma_{0}i\omega_{1}\left(e^{-i\omega_{0}}B_{1} + s_{0}e^{-i\omega_{0}s_{0}}B_{2}\right)\right]\hat{y}_{1}(1) \\
&=
\left[ \left(b_{1} + 2\right)\omega^{2}_{0} + i\left(\dfrac{2\omega^{2}_{0}}{\gamma_{0}} - b_{1}\gamma_{0}\right)\omega_{0}\right]\gamma_{1} \\
&\hspace{4mm}+
\left\lbrace -\biggl[\left(b_{1} + 2\right)\gamma_{0}\omega_{0} + \left(s_{0} + 1\right)\left[\left(b_{0} + b_{1}\right)\gamma^{2}_{0}\omega_{0} - \omega^{3}_{0}\right]\biggr]\right. \\
&\hspace{11mm}+
i \left. \biggl[\left(b_{1}\gamma^{2}_{0} - 2\omega_{0}^{2}\right) + \left(s_{0} + 1\right)\left[\left(-b_{1} - 1\right)\gamma_{0}\omega_{0}^{2} + b_{0}\gamma_{0}^{3}\right]\biggr]\right\rbrace \omega_{1},
\end{split}
\end{equation*}
which is equivalent to the linear system
\begin{equation}\label{C(g1_w1)_equal_0}
\left(
	\begin{array}{cc}
	\left(b_{1} + 2\right)\omega_{0}^{2}											& -\left\lbrace \left(b_{1} + 2\right)\gamma_{0}\omega_{0} + \left( s_{0} + 1\right)\left[\left(b_{0} + b_{1}\right)\gamma_{0}^{2}\omega_{0} - \omega_{0}^{3}\right] \right\rbrace   \\ \\
     \left(\dfrac{2\omega_{0}^{2}}{\gamma_{0}} - b_{1}\gamma_{0}\right)\omega_{0}	& b_{1}\gamma_{0}^{2} - 2\omega_{0}^{2} + \left(s_{0} + 1\right)\left[\left(-b_{1}-1\right)\gamma_{0}\omega_{0}^{2} + b_{0}\gamma_{0}^{3}\right]
	\end{array}
\right)
\left(
	\begin{array}{c}
		\gamma_{1} \\ \\ \\
		\omega_{1}
	\end{array}
\right)
=
0.
\end{equation}

Let $C$ be the matrix in (\ref{C(g1_w1)_equal_0}) and by the fact that
\begin{equation*}
\det C = -\omega_{0}^{2}\left(s_{0}+1\right)\left[\left(b_{0}^{2} + 3\right)\gamma_{0}\omega_{0}^{2} + \left(b_{0}^{2} + 1\right)\gamma_{0}^{3} + \dfrac{2\omega_{0}^{4}}{\gamma_{0}}\right] < 0,
\end{equation*}
we have that $\gamma_{1} = \omega_{1} = 0$ and thus $\hat{R}_{2}(1) = 0$. So, choosing $\hat{y}_{2}(1) = \hat{y}_{1}(1)$ and using the argument of Proposition \ref{proposition_for_finding_periodic_solutions}, the curve $y_{2}(t)$ exists.

At order $\varepsilon^{3}$ we have,
\begin{equation}\label{order_e^3}
\left(\Pi y_{3}\right)(t)= R_{3}(t),
\end{equation}
where
$R_{3}(t)
=
-\left\lbrace\left(\Pi_{2}y_{1}\right)(t)-\gamma_{0}\omega_{2}\left[B_{1}y_{1}'(t-\omega_{0}) + s_{0}B_{2}y_{1}'(t-s_{0}\omega_{0})\right]\right\rbrace + G_{3}(t)
$. We observe that $G_{3}(t)=\left(G^{(1)}_{3}(t), G^{(2)}_{3}(t)\right)^{T}$ is given by,
\begin{equation*}
\begin{split} 
G_{3}^{(1)}(t)
&=
\dfrac{2\gamma_{0}}{v_{0}}\left[U_{1}(t)-\frac{u_{0}}{v_{0}}V_{1}\left(t-s_{0}\omega_{0}\right)\right]\left[U_{2}(t) - \frac{u_{0}}{v_{0}}V_{2}\left(t - s_{0}\omega_{0}\right)\right]\\
&\hspace{4mm}-
\dfrac{\gamma_{0}}{v_{0}^{2}}V_{1}\left(t - s_{0}\omega_{0}\right)\left[U_{1}(t) - \frac{u_{0}}{v_{0}}V_{1}\left(t - s_{0}\omega_{0}\right)\right]^{2},
\end{split}
\end{equation*}	
and $G_{3}^{(2)}(t)
=
2\gamma_{0}
U_{1}(t-\omega_{0})U_{2}(t-\omega_{0})$. Then equation (\ref{order_e^3}) is equivalent to solving,
\begin{equation*}
\Delta(ni\omega_{0},\gamma_{0}) \hat{y}_{3}(n)=\hat{R}_{3}(n), \text{ for each $n\in\mathbb{Z}$,}
\end{equation*}
where
$\hat{R}_{3}(n)
=
- \left[ \tilde{\Delta} (ni\omega_{2},\gamma_{2},ni\omega_{0}) + \gamma_{0}ni\omega_{2} (e^{-ni\omega_{0}} B_{1} + s_{0} e^{-ni\omega_{0}s_{0}} B_{2}) \right] \hat{y}_{1}(n)
+
\hat{G}_{3}(n)
$, with $\hat{G}_{3}(n)$ is written as follows $\hat{G}_{3}(n) = \left(\hat{G}^{(1)}_{3}(n), \hat{G}^{(2)}_{3}(n)\right)^{T}$, given by,
\begin{equation*}
\begin{split}
G_{3}^{(1)}(n)
&=
\dfrac{2\gamma_{0}}{v_{0}}
\sum\limits_{\substack{n_{1} + n_{2} = n\\ n_{1,2}\in\mathbb{Z}}} \left[\hat{U}_{1}(n_{1}) - \dfrac{u_{0}}{v_{0}}\hat{V}_{1}(n_{1})e^{-n_{1}i\omega_{0}s_{0}}\right] \left[\hat{U}_{2}(n_{2}) - \dfrac{u_{0}}{v_{0}}\hat{V}_{2}(n_{2})e^{-n_{2}i\omega_{0}s_{0}}\right] \\
&\hspace{4mm}-
\dfrac{\gamma_{0}}{v_{0}^{2}}
\sum\limits_{\substack{n_{1} + n_{2} + n_{3} = n\\ n_{1,2,3}\in\mathbb{Z}}}
\hat{V}_{1}(n_{1})e^{-n_{1}i\omega_{0}s_{0}}
\left[\hat{U}_{1}(n_{2}) - \dfrac{u_{0}}{v_{0}}\hat{V}_{1}(n_{2})e^{-n_{2}i\omega_{0}s_{0}}\right]
\left[\hat{U}_{1}(n_{3}) - \dfrac{u_{0}}{v_{0}}\hat{V}_{1}(n_{3})e^{-n_{3}i\omega_{0}s_{0}}\right],
\end{split}
\end{equation*}
and $\hat{G}_{3}^{(2)}(n)
=
2\gamma_{0}e^{-ni\omega_{0}}
\sum\limits_{\substack{n_{1} + n_{2} = n\\ n_{1,2}\in\mathbb{Z}}}
\hat{U}_{1}(n_{1})\hat{U}_{2}(n_{2})
$. In this form we observe that $\hat{R}_{3}(n) = 0$ for $\vert n\vert \geq 4$.

For $n = 3$ we have that,
\begin{equation*}
\begin{split}
\hat{R}_{3}(3)
&=
2\gamma_{0}
\begin{pmatrix}
	\dfrac{1}{v_{0}} \left[\hat{U}_{1}(1) - \dfrac{u_{0}}{v_{0}}\hat{V}_{1}(1)e^{-i\omega_{0}s_{0}}\right]\left[\hat{U}_{2}(2) - \dfrac{u_{0}}{v_{0}}\hat{V}_{2}(2)e^{-2i\omega_{0}s_{0}}\right] \\ \\
	e^{-3 i \omega_{0}}\hat{U}_{1}(1)\hat{U}_{2}(2)
\end{pmatrix} \\
&\hspace{4mm}-
\dfrac{\gamma_{0}}{v_{0}^{2}}
\begin{pmatrix}
	\hat{V}_{1}(1)e^{-i \omega_{0}s_{0}}\left[\hat{U}_{1}(1) - \dfrac{u_{0}}{v_{0}}\hat{V}_{1}(1)e^{-i\omega_{0}s_{0}}\right]^{2} \\ \\
	0
\end{pmatrix}.
\end{split}
\end{equation*}

For $n = 2$ we obtain,
\begin{equation*}
\hat{R}_{3}(2) = 2\gamma_{0}
\begin{pmatrix}
	\dfrac{1}{v_{0}} \left[\hat{U}_{1}(1) - \frac{u_{0}}{v_{0}}\hat{V}_{1}(1)e^{-i\omega_{0}s_{0}}\right]\left[\hat{U}_{2}(1) - \frac{u_{0}}{v_{0}}\hat{V}_{2}(1)e^{-i\omega_{0}s_{0}}\right] \\ \\
	e^{-2 i \omega_{0}}\hat{U}_{1}(1)\hat{U}_{2}(1)
\end{pmatrix},
\end{equation*}
and for $n = 0$,
\begin{equation*}
\hat{R}_{3}(0)
=
4\gamma_{0}\operatorname{Re}
\begin{pmatrix}
	\dfrac{1}{v_{0}} \left[\hat{U}_{1}(1) - \frac{u_{0}}{v_{0}}\hat{V}_{1}(1)e^{-i\omega_{0}s_{0}}\right]\left[\hat{U}_{2}(-1) - \frac{u_{0}}{v_{0}}\hat{V}_{2}(-1)e^{i\omega_{0}s_{0}}\right] \\ \\
	\hat{U}_{1}(1)\hat{U}_{2}(-1)
\end{pmatrix}.
\end{equation*}

Therefore $\hat{y}_{3}(n) = 0$ for $\vert n\vert \geq 4$ and $\hat{y}_{3}(n) = \Delta^{-1}(n i\omega_{0}, \gamma_{0})\hat{R}_{3}(n)$ for $n\in\left\lbrace 0, 2, 3\right\rbrace$.

For $n = 1$,
\begin{equation*}
\hat{R}_{3}(1)
=
- \left[ \tilde{\Delta} (i\omega_{2},\gamma_{2},i\omega_{0}) + \gamma_{0}i\omega_{2} (e^{-i\omega_{0}} B_{1} + s_{0} e^{-i\omega_{0}s_{0}} B_{2}) \right] \hat{y}_{1}(1)
+
\hat{G}_{3}(1),
\end{equation*}
where
\begin{equation*}
\begin{split} 
\hat{G}_{3}^{(1)} (1)
&=
\dfrac{2\gamma_{0}}{v_{0}} \left\lbrace  \left[ \hat{U}_{1}(-1)-\frac{u_{0}}{v_{0}} \hat{V}_{1}(-1)  e^{i\omega_{0}s_{0}} \right]  \left[ \hat{U}_{2} (2) - \dfrac{u_{0}}{v_{0}} \hat{V}_{2} (2)  e^{-2 i\omega_{0}s_{0}} \right]  \right.\\
&\hspace{16mm}+
\left.
\left[ \hat{U}_{1} (1) - \dfrac{u_{0}}{v_{0}} \hat{V}_{1} (1)  e^{-i\omega_{0}s_{0}} \right] \left[ \hat{U}_{2} (0) -\dfrac{u_{0}}{v_{0}} \hat{V}_{2} (0)   \right] 
 \right\rbrace \\
&\hspace{5mm}-
\dfrac{\gamma_{0}}{v_{0}^{2}} \left\lbrace \hat{V}_{1}(-1)e^{i\omega_{0}s_{0}} \left[ \hat{U}_{1}(1) - \dfrac{u_{0}}{v_{0}} \hat{V}_{1}(1)e^{-i\omega_{0}s_{0}} \right]^{2} \right. \\
&\hspace{16mm}+
\left.
2 \hat{V}_{1}(1)e^{-i\omega_{0}s_{0}} \bigg\vert \hat{U}_{1}(1)-\dfrac{u_{0}}{v_{0}} \hat{V}_{1}(1)e^{-i\omega_{0}s_{0}} \bigg\vert^{2}
\right\rbrace,
\end{split}
\end{equation*}
and $\hat{G}_{3}^{(2)}(1) = 2\gamma_{0}e^{-i\omega_{0}} \left[ \hat{U}_{1}(-1) \hat{U}_{2}(2) + \hat{U}_{1}(1) \hat{U}_{2}(0)\right]$.

Therefore, $\hat{\psi}(-1)\hat{R}_{3}(1) = 0$ implies that,
\begin{equation*}
\begin{split}
0
&=
-\left\lbrace \left[ \left(b_{1} + 2\right)\omega_{0}^{2} + i\left(\dfrac{2\omega_{0}^{2}}{\gamma_{0}} - b_{1}\gamma_{0}\right)\omega_{0}\right]\gamma_{2} \right. \\
&\hspace{10mm}+
\left\lbrace -\bigg[\left(b_{1} + 2\right)\gamma_{0}\omega_{0} + \left(s_{0} + 1\right)\left[\left(b_{0} + b_{1}\right)\gamma_{0}^{2}\omega_{0} - \omega_{0}^{3}\right]\bigg]\right. \\
&\hspace{17mm}+
\left. \left. i\bigg[\left(b_{1}\gamma_{0}^{2} - 2\omega_{0}^{2}\right) + \left(s_{0} + 1\right)\left[\left(-b_{1} - 1\right)\gamma_{0}\omega_{0}^{2} + b_{0}\gamma_{0}^{3}\right]\bigg]\right\rbrace\omega_{2}\right\rbrace \\
&\hspace{4mm}+
\hat{\psi}(-1)\hat{G}_{3}(1),
\end{split}
\end{equation*}
which is equivalent to,
\begin{equation*}
\begin{pmatrix}
	\gamma_{2} \\ \\
	\omega_{2}
\end{pmatrix} 
=
C^{-1}
\begin{pmatrix}
	\operatorname{Re}\left(\hat{\psi}(-1)\hat{G}_{3}(1) \right) \vspace{1mm} \\ 
	\operatorname{Im}\left(\hat{\psi}(-1)\hat{G}_{3}(1) \right)
\end{pmatrix},
\end{equation*}
and by Proposition \ref{proposition_for_finding_periodic_solutions} since $\hat{R}_{3}(1)\in\operatorname{Range} \Delta(i\omega_{0},\gamma_{0})$, the curve $y_{3}(t)$ exists.

For at order $\varepsilon^{k}$ for $k\geq 4$ we consider the following. Taking $N = f - L$ then (\ref{DDE_GM_en_formato_f_delays_w_s0w_perturbed}) is equivalent to,
\begin{equation}\label{DDE_GM_en_formato_N_equal_f_minus_L_delays_w_s0w_perturbed}
	\begin{split}
		\omega(\varepsilon)\partial_{t}\left(y(t,\varepsilon) \right)
		&-
		\gamma(\varepsilon)L\left(y(t,\varepsilon), y\left(t-\omega(\varepsilon),\varepsilon \right), y\left(t-s_{0}\omega(\varepsilon),\varepsilon \right) \right)	\\
		&=
		\gamma(\varepsilon)N\left(y(t,\varepsilon), y\left(t-\omega(\varepsilon),\varepsilon \right), y\left(t-s_{0}\omega(\varepsilon),\varepsilon \right) \right).
	\end{split}
\end{equation}

The left side of (\ref{DDE_GM_en_formato_N_equal_f_minus_L_delays_w_s0w_perturbed}), at order $\varepsilon^{k}$ is given by,
\begin{equation}\label{linear_part_of_DDE_perturbed_at_order_epsilon_k}
\begin{split}
\left(\Pi y_{k}\right)(t)
&+
\left(\Pi_{k-1}y_{1}\right)(t)
-
\dfrac{1}{(k-1)!}\gamma_{0}\left[B_{1}\partial_{\varepsilon}^{k-1}\left(y_{1}\left(t - \omega\left(\varepsilon\right)\right) \right)\big\vert_{\varepsilon=0} + B_{2}\partial_{\varepsilon}^{k-1}\left(y_{1}\left(t - s_{0}\omega\left(\varepsilon\right)\right) \right)\big\vert_{\varepsilon=0}\right] \\
&+
\sum\limits_{k_{1} = 2}^{k-2}\left\lbrace\left(\Pi_{k_{1}}y_{k-k_{1}}\right)(t) \textcolor{white}{\dfrac{1}{k_{1}}}	\right.	\\
&\hspace{14mm}-
\left.	\dfrac{1}{k_{1}!}\gamma_{0}\left[ B_{1}\partial_{\varepsilon}^{k_{1}}\left(y_{k-k_{1}}\left(t - \omega\left(\varepsilon\right)\right) \right)\big\vert_{\varepsilon=0} + B_{2}\partial_{\varepsilon}^{k_{1}}\left(y_{k-k_{1}}\left(t - s_{0}\omega\left(\varepsilon\right)\right) \right)\big\vert_{\varepsilon=0}\right] \right\rbrace \\
&-
\sum\limits_{k_{1} = 2}^{k-2}\gamma_{k_{1}}\left\lbrace \sum\limits_{j_{1} = 2}^{k-k_{1}} \dfrac{1}{j_{1}!}\left[ B_{1}\partial_{\varepsilon}^{j_{1}}\left(y_{k-k_{1}-j_{1}}\left(t - \omega\left(\varepsilon\right)\right) \right)\big\vert_{\varepsilon=0}	\right.	\right.	\\
&\hspace{26mm}
\left.	\textcolor{white}{\sum\limits_{j_{1} = 2}^{k-k_{1}}} \left. + B_{2}\partial_{\varepsilon}^{j_{1}}\left(y_{k-k_{1}-j_{1}}\left(t - s_{0}\omega\left(\varepsilon\right)\right) \right)\big\vert_{\varepsilon=0}\right] \right\rbrace.
\end{split}
\end{equation}

Expanding $y_{j}(t)$ in Fourier series for $1\leq j\leq k$,
\begin{equation*}
y_{j}(t) = \sum\limits_{n\in\mathbb{Z}} \hat{y}_{j}(n)e^{i n t},
\end{equation*}
and defining,
\begin{equation}
\tilde{\Delta} (\lambda,\gamma,z)
=
\left(
\begin{array}{cc}
\lambda+\gamma\left(-\dfrac{2 u_{0}}{v_{0}}+b\right) & \gamma\dfrac{u_{0}^{2}}{v_{0}^{2}}e^{-z s_{0}} \\ \\
            -2\gamma u_{0}e^{-z}          &         \lambda+\gamma
\end{array}
\right),
\end{equation}
then we observe that expanding (\ref{linear_part_of_DDE_perturbed_at_order_epsilon_k}) in Fourier series, the $n$-th coefficient is,
\begin{equation*}
\Delta(n i\omega_{0}, \gamma_{0})\hat{y}_{k}(n)
+
\left[\tilde{\Delta}(n i\omega_{k-1}, \gamma_{k-1}, n i\omega_{0}) + \gamma_{0}n i\omega_{k-1}\left(e^{-n i\omega_{0}}B_{1} + s_{0}e^{-n i\omega_{0}s_{0}}B_{2}\right)\right]\hat{y}_{1}(n) + \hat{L}_{k}(n),
\end{equation*}
where, using the notation $e_{k}$ and $\tilde{e}_{k}$ of the \textbf{Appendix},
\begin{equation*}
\begin{split}
\hat{L}_{k}(n) =
&-
\gamma_{0}\bigl[\tilde{e}_{k-2}\left(\omega_{0},\dots,\omega_{k-2};n;1\right)B_{1} + \tilde{e}_{k-2}\left(\omega_{0},\dots,\omega_{k-2};n;s_{0}\right)B_{2}\bigr]\hat{y}_{1}(n)	\\
&+
\sum\limits_{k_{1}=2}^{k-2}\left\lbrace \tilde{\Delta}\left(n i\omega_{k_{1}}, \gamma_{k_{1}},n i\omega_{0}\right)
- \gamma_{0}\bigl[ e_{k_{1}}\left(\omega_{0},\dots,\omega_{k_{1}};n;1\right)B_{1} + e_{k_{1}}\left(\omega_{0},\dots,\omega_{k_{1}}; n; s_{0}\right)B_{2}\bigr]\right\rbrace \hat{y}_{k-k_{1}}(n) \\
&-
\sum\limits_{k_{1}=2}^{k-2} \gamma_{k_{1}}\left\lbrace \sum\limits_{j_{1}=2}^{k-k_{1}} \left[e_{j_{1}}\left(\omega_{0},\dots,\omega_{j_{1}}; n; 1\right)B_{1} + e_{j_{1}}\left(\omega_{0},\dots,\omega_{j_{1}}; n; s_{0}\right)B_{2}\right]\hat{y}_{k-k_{1}-j_{1}}(n)\right\rbrace.
\end{split}
\end{equation*}

We note that the expression of $\hat{L}_{k}(n)$ has been obtained using automatic differentiation (see \cite{haro_canadell_figueras_luque_mondelo}).

On the other hand, we observe that $N(y(t,0), y(t-\omega_{0},0), y(t-s_{0}\omega_{0},0)) = N(0,0,0) = 0$ and by Taylor's theorem,
\begin{equation*}
N(y(t,\varepsilon), y(t-\omega(\varepsilon),\varepsilon),y (t-s_{0}\omega(\varepsilon),\varepsilon))
=
\sum\limits_{k=0}^{\infty} \dfrac{1}{k!} \partial_{\varepsilon}^{k} \bigl( N(y(t,\varepsilon),y(t-\omega)(\varepsilon),\varepsilon), y(t-s_{0}\omega(\varepsilon),\varepsilon))\bigr)\big\vert_{\varepsilon=0}\varepsilon^{k}.
\end{equation*}

In order to obtain an explicit expression of $N(y(t,\varepsilon), y(t-\omega(\varepsilon),\varepsilon),y (t-s_{0}\omega(\varepsilon),\varepsilon))$ as a Taylor series, we observe that,
\begin{equation*}
\begin{split}
\partial_{\varepsilon}^{1}\left(N (y(t,\varepsilon),y (t-\omega(\varepsilon),\varepsilon),y(t-s_{0}\omega(\varepsilon),\varepsilon))\right)
&=
\begin{pmatrix}
2\left(\dfrac{U\left(t,\varepsilon\right) + u_{0}}{V\left(t-s_{0}\omega(\varepsilon)\right) + v_{0}}\right)\partial_{\varepsilon}\left(U\left(t,\varepsilon\right)\right) - \dfrac{2u_{0}}{v_{0}}\partial_{\varepsilon}\left(U\left(t,\varepsilon\right)\right)	\\	\\
0
\end{pmatrix}	\\
&\hspace{4mm}+
\begin{pmatrix}
0	\\	\\
2 U\left(t-\omega(\varepsilon), \varepsilon\right)\partial_{\varepsilon}\left(U\left(t-\omega(\varepsilon), \varepsilon\right)\right)
\end{pmatrix}	\\
&\hspace{4mm}+
\begin{pmatrix}
\dfrac{u_{0}^{2}}{v_{0}^{2}}\partial_{\varepsilon}\left(V\left(t-s_{0}\omega(\varepsilon),\varepsilon\right)\right)	\\	\\
0
\end{pmatrix}	\\
&\hspace{4mm}-
\begin{pmatrix}
\left(\dfrac{U\left(t,\varepsilon\right) + u_{0}}{V\left(t-s_{0}\omega(\varepsilon)\right) + v_{0}}\right)^{2}\partial_{\varepsilon}\left(V\left(t-s_{0}\omega(\varepsilon),\varepsilon\right)\right)	\\	\\
0
\end{pmatrix},
\end{split}
\end{equation*}
therefore $\partial_{\varepsilon}^{1}\left(N (y(t,\varepsilon),y (t-\omega(\varepsilon),\varepsilon),y(t-s_{0}\omega(\varepsilon),\varepsilon))\right)\vert_{\varepsilon=0}=0$.

The idea is expanding all expressions in Taylor series in $\varepsilon$. For this we observe that,
\begin{equation*}
\dfrac{U(t,\varepsilon) + u_{0}}{V(t-s_{0}\omega(\varepsilon),\varepsilon) + v_{0}}
=
\sum\limits_{k=0}^{\infty} d_{k}\varepsilon^{k},
\end{equation*}
where (see for example \cite{haro_canadell_figueras_luque_mondelo}),
\begin{center}
$d_{0}=\dfrac{u_{0}}{v_{0}}$,	\\
$d_{k}=\dfrac{1}{v_{0}} \left\lbrace U_{k}(t) - \sum\limits_{l=0}^{k-1}d_{l}\left[\sum\limits_{j_{1}=0}^{k-l} \dfrac{1}{j_{1}!}\partial_{\varepsilon}^{j_{1}}\left(V_{k-l-j_{1}}(t-s_{0}\omega(\varepsilon)\right)\big\vert_{\varepsilon=0}\right]\right\rbrace$ for $k\geq1$.
\end{center}

Then consider the following lemma.
\begin{lemma}
Suppose that for $k\geq 2$, we have
\begin{equation*}
\begin{split}
d_{k}
&=
d_{k}(t)	\\
&=
\sum\limits_{n\in\mathbb{Z}} \hat{d}_{k}(n)e^{i n t}, \hspace{2mm} \hat{d}_{k}(n) = \operatorname{conj}\left(\hat{d}_{k}(-n)\right)\in\mathbb{C}\text{ for all } n\in\mathbb{Z},
\end{split}
\end{equation*}
then
\begin{equation*}
\hat{d}_{k}(n)
=
\dfrac{d_{0}}{v_{0}}n i\omega_{k-1}s_{0}\hat{V}_{1}(n)e^{-n i\omega_{0}s_{0}}
+
h\left(\hat{U}_{1}, \hat{V}_{1},\dots, \hat{U}_{k}, \hat{V}_{k};\omega_{0},\dots, \omega_{k-2} \right),
\end{equation*}
where $h$ is an expression with terms that only depend of $\hat{U}_{1}, \hat{V}_{1},\dots, \hat{U}_{k}, \hat{V}_{k};\omega_{0},\dots, \omega_{k-2}$.
\end{lemma}
\begin{proof}
	Observe that,
	\begin{equation*}
		\begin{split}
			d_{1}
			&=
			d_{1}(t)	\\
			&=
			\dfrac{1}{v_{0}}\left[U_{1}(t) - d_{0}V_{1}(t - s_{0}\omega_{0}) \right]	\\
			&=
			\sum\limits_{n\in\mathbb{Z}}\hat{d}_{1}(n)e^{i n  t},
		\end{split}
	\end{equation*}
	where $\hat{d}_{1}(n)=\dfrac{1}{v_{0}}\left[\hat{U}_{1}(n) - d_{0}\hat{V}_{1}(n)e^{-n i\omega_{0}s_{0}} \right]$.
We proceed by induction on $k$.

The claim is true for $k=2$ and $k=3$ since,
\begin{equation*}
	\begin{split}
		d_{2}
		&=
		d_{2}(t) \\
		&=
		\sum\limits_{n\in\mathbb{Z}}\hat{d}_{2}(n) e^{i n t},
	\end{split}
\end{equation*}
where $\hat{d}_{2}(n)
=
\dfrac{d_{0}}{v_{0}}n i\omega_{1}s_{0}\hat{V}_{1}(n)e^{-n i\omega_{0}s_{0}}
+
\dfrac{1}{v_{0}}\left[\hat{U}_{2}(n) - d_{0}\hat{V}_{2}(n)e^{-n i\omega_{0}s_{0}} - \left(\hat{d}_{1}\ast\hat{V}_{1}(\cdot)e^{-(\cdot)i\omega_{0}s_{0}}\right)(n)\right]
$\footnote
{
where $\hat{g}_{1}*\hat{g}_{2}$ is the Cauchy product of $\hat{g}_{1}$ with $\hat{g}_{2}$, given by $\left(\hat{g}_{1}*\hat{g}_{2} \right)(n) = \sum\limits_{\substack{n_{1} + n_{2} = n\\ n_{1,2}\in\mathbb{Z}}} \hat{g}_{1}(n_{1})\hat{g}_{2}(n_{2})$ 
}
and,
\begin{equation*}
	\begin{split}
	d_{3}
	&=
	d_{3}(t)	\\
	&=
	\sum\limits_{n\in\mathbb{Z}}\hat{d}_{3}(n) e^{i n t},
	\end{split}
\end{equation*}
with,
\begin{equation*}
	\begin{split}
		\hat{d}_{3}(n)
		&=
		\dfrac{d_{0}}{v_{0}}n i\omega_{2}s_{0}\hat{V}_{1}(n)e^{-n i\omega_{0}s_{0}}	\\
		&\hspace{4mm}+
		\dfrac{1}{v_{0}}\left\lbrace\hat{U}_{3}(n) - d_{0}\left[\hat{V}_{3}(n)e^{-n i\omega_{0}s_{0}} - n i\omega_{1}s_{0}\hat{V}_{2}(n)e^{-n i\omega_{0}s_{0}} - \dfrac{1}{2}n^{2}\omega_{1}^{2}s_{0}^{2}\hat{V}_{1}(n)e^{-n i\omega_{0}s_{0}} \right]\right.	\\
		&\hspace{16mm}-
		\left[\hat{d}_{1}\ast\left(\hat{V}_{2}(\cdot)e^{-(\cdot) i\omega_{0}s_{0}} - (\cdot) i\omega_{1}s_{0}\hat{V}_{1}(\cdot)e^{-(\cdot)i\omega_{0}s_{0}}\right)\right](n)
		-
		\left[\hat{d}_{2}\ast\hat{V}_{1}(\cdot)e^{-(\cdot)i\omega_{0}s_{0}}\right](n) \bigg \}.
	\end{split}
\end{equation*}

Suppose that $\hat{d}_{2}(n)$, $\hat{d}_{3}(n)$, $\dots$, $\hat{d}_{k-1}(n)$ satisfy the property. Now, we show that $\hat{d}_{k}(n)$ satisfies the property. Observe that, now $k\geq 4$ and,
\begin{equation*}
	\begin{split}
		d_{k}
		&=
		d_{k}(t)	\\
		&=
		\sum\limits_{n\in\mathbb{Z}}\hat{d}_{k}(n) e^{i n t},
	\end{split}
\end{equation*}
where, using the induction hypotesis,
\begin{equation*}
	\begin{split}
		\hat{d}_{k}(n)
		&=
		\dfrac{d_{0}}{v_{0}}n i\omega_{k-1}s_{0}\hat{V}_{1}(n)e^{-n i\omega_{0}s_{0}}	\\
		&\hspace{4mm}+
		\dfrac{1}{v_{0}}\left\lbrace \hat{U}_{k}(n) - d_{0}\sum\limits_{j_{1}=0}^{k-2}e_{j_{1}}\left(\omega_{0},\cdots,\omega_{j_{1}};n;s_{0}\right)\hat{V}_{k-j_{1}}(n)
		-
		d_{0}\tilde{e}_{k-2}\left(\omega_{0},\cdots,\omega_{k-2};n;s_{0}\right)\hat{V}_{1}(n)	\right.	\\
		&\hspace{25mm}-
		\left[\hat{d}_{1}\left(\hat{U}_{1},\hat{V}_{1};\omega_{0};\cdot\right)\ast\left(\sum\limits_{j_{1}=0}^{k-2}e_{j_{1}}\left(\omega_{0},\dots,\omega_{j_{1}};\cdot;s_{0}\right)\hat{V}_{k-1-j_{1}}(\cdot)\right)\right](n)	\\
		&\hspace{25mm}-
		\sum\limits_{l=2}^{k-1}\left[\hat{d}_{l}\left(\hat{U}_{1},\hat{V}_{1},\dots,\hat{U}_{l},\hat{V}_{l};\omega_{0},\dots,\omega_{l-1};\cdot\right)\ast \textcolor{white}{\left[\sum\limits_{j_{1}=0}^{k}\right]}	\right.	\\
		&\hspace{40mm}\ast
		\left. \left. \left(\sum\limits_{j_{1}=0}^{k-1-l}e_{j_{1}}\left(\omega_{0},\dots,\omega_{j_{1}};\cdot;s_{0}\right)\hat{V}_{k-l-j_{1}}(\cdot)\right) \right](n)\right\rbrace.
	\end{split}
\end{equation*}
\end{proof}

Thus, we have that for $k\geq 1$,
\begin{equation*}
\begin{split}
d_{k}
&=
d_{k}(t) \\
&=
d_{k}(U_{1}, V_{1},\dots,U_{k}, V_{k}, \omega_{0},\dots,\omega_{k-1})(t).
\end{split}
\end{equation*}

In this form we have the Taylor coefficients for the following.

The term, $
\left(\dfrac{U(t,\varepsilon)+u_{0}}{V(t-s_{0}\omega(\varepsilon),\varepsilon)+v_{0}}\right)\partial_{\varepsilon}(U(t,\varepsilon))
$,
at order $\varepsilon^{k}$ is
$
\sum\limits_{k_{1}=0}^{k}(k-k_{1}+1) d_{k_{1}}(t) U_{k-k_{1}+1}(t)
$
for $k\geq 0$.

The term, $
2 \left(\dfrac{U(t,\varepsilon) + u_{0}}{V(t-s_{0} \omega(\varepsilon),\varepsilon) + v_{0}}\right) \partial_{\varepsilon} \left(U(t,\varepsilon)\right) - \dfrac{2u_{0}}{v_{0}} \partial_{\varepsilon} \left(U(t,\varepsilon)\right)
$,
at order $\varepsilon^{k}$ is $0$ for $k = 0$ and $2 \sum\limits_{k_{1}=1}^{k} (k-k_{1} + 1) d_{k_{1}}(t) U_{k-k_{1}+1}(t)$ for $k\geq 1$.

The term, $2U (t-\omega(\varepsilon),\varepsilon) \partial_{\varepsilon} (U(t-\omega(\varepsilon),\varepsilon))$, at order $\varepsilon^{k}$ is $0$ for $k = 0$ and \\
 $2 \sum\limits_{k_{1}=1}^{k} (k-k_{1}+1) \left[\sum\limits_{j_{1}=0}^{k_{1}} \dfrac{1}{j_{1}!} \partial_{\varepsilon}^{j_{1}} (U_{k_{1}-j_{1}} (t-\omega(\varepsilon)))\vert_{\varepsilon=0}\right] \left[\sum\limits_{j_{1}=0}^{k-k_{1}+1} \dfrac{1}{j_{1}!} \partial_{\varepsilon}^{j_{1}} (U_{k-k_{1}+1-j_{1}} (t-\omega(\varepsilon)))\vert_{\varepsilon=0}\right]$ for $k\geq1$.

The term, 
$
\dfrac{u_{0}^{2}}{v_{0}^{2}} \partial_{\varepsilon} (V(t-s_{0}\omega(\varepsilon),\varepsilon))-\left(\dfrac{U(t,\varepsilon)+u_{0}}{V(t-s_{0}\omega(\varepsilon),\varepsilon)+v_{0}}\right)^{2} \partial_{\varepsilon} (V(t-s_{0}\omega(\varepsilon),\varepsilon))
$,
at order $\varepsilon^{k}$ is $0$ for $k = 0$ and\\
$-\sum\limits_{k_{1}=1}^{k} (k-k_{1}+1) \left[\sum\limits_{j_{1=0}}^{k_{1}} d_{j_{1}}(t) d_{k_{1}-j_{1}}(t)\right]\left[\sum\limits_{j_{1}=0}^{k-k_{1}+1} \dfrac{1}{j_{1}!} \partial_{\varepsilon}^{j_{1}} (V_{k-k_{1}+1-j_{1}} (t-s_{0}\omega(\varepsilon)))\vert_{\varepsilon=0} \right]$ for $k\geq1$.

Hence, defining $\tilde{N}_{0}^{(1)}(t) = \tilde{N}_{0}^{(2)}(t) = 0$ and for $k\geq1$,
\begin{equation}\label{N_k_1_tilde}
\begin{split}
\tilde{N}_{k}^{(1)} (t)
&=
\sum\limits_{k_{1}=1}^{k} (k-k_{1}+1)
\left\lbrace
2
d_{k_{1}} (t) U_{k-k_{1}+1}(t)\textcolor{white}{\left[\sum\limits_{j_{1}=0}^{k}\right]}\right.	\\
&\hspace{32mm}-
\left. \left[ \sum\limits_{j_{1}=0}^{k_{1}} d_{j_{1}} (t) d_{k_{1}-j_{1}} (t) \right]  
\left[  \sum\limits_{j_{1}=0}^{k-k_{1}+1} \dfrac{1}{j_{1}!} \partial_{\varepsilon}^{j_{1}} (V_{k-k_{1}+1-j_{1}} (t-s_{0}\omega(\varepsilon))) \vert_{\varepsilon=0} \right] \right\rbrace,
\end{split}
\end{equation}
and,
\begin{equation}\label{N_k_2_tilde}
	\begin{split}
		\tilde{N}_{k}^{(2)} (t)
		&=
		2 \sum\limits_{k_{1}=1}^{k} (k-k_{1}+1) \left[\sum\limits_{j_{1}=0}^{k_{1}} \dfrac{1}{j_{1}!} \partial_{\varepsilon}^{j_{1}} (U_{k_{1}-j_{1}}(t-\omega(\varepsilon))) \vert_{\varepsilon=0}\right]\times	\\
		&\hspace{32mm}\times
 		\left[\sum\limits_{j_{1}=0}^{k-k_{1}+1} \dfrac{1}{j_{1}!} \partial_{\varepsilon}^{j_{1}} (U_{k-k_{1}+1-j_{1}} (t-\omega(\varepsilon))) \vert_{\varepsilon=0}\right],
	\end{split}
\end{equation}
then the functions $2\pi$-periodic $\tilde{N}_{k} (t)
=
\left(
\tilde{N}_{k}^{(1)} (t),
\tilde{N}_{k}^{(2)} (t)
\right)^{T}
$
for $k\geq 0$,
satisfies,
\begin{equation*}
\partial_{\varepsilon}(N(y(t,\varepsilon), y(t-\omega(\varepsilon),\varepsilon),y (t-s_{0}\omega(\varepsilon),\varepsilon))) = \sum\limits_{k=0}^{\infty} \tilde{N}_{k} (t)\varepsilon^{k}.
\end{equation*}

We observe that in (\ref{N_k_1_tilde}), the term $d_{j}(t)$ its write as $d_{j}(t) = d_{j}(U_{1}, V_{1},\dots, U_{j}, V_{j}; \omega_{0},\dots,\omega_{j-1})(t)$, hence when $j=k$ then $d_{k}$ contains $U_{k}, V_{k}$ and $\omega_{k-1}$. For the term $\partial_{\varepsilon}^{j_{1}}\left( V_{k-k_{1} + 1 - j_{1}} \left( t - s_{0}\omega\left( \varepsilon \right) \right) \right)\vert_{\varepsilon=0}$, given that $0\leq j_{1} \leq k-k_{1}+1$ and $k-k_{1}+1\leq k$, then with $k_{1}=1$ we have $\sum\limits_{j_{1}=1}^{k}\dfrac{1}{j_{1}!}\partial_{\varepsilon}^{j_{1}}\left( V_{k-j_{1}}(t - s_{0}\omega(\varepsilon)) \right)\vert_{\varepsilon=0}$ and in this sum we obtain $\omega_{k-1}$ when $j_{1}=k-1$. When $k\geq 2$ then we obtain $\omega_{j}$ with $j<k-1$. Therefore the function $\tilde{N}_{k}(t)$ depends only on $U_{1}, V_{1},\dots, U_{k}, V_{k}$ and $\omega_{0},\dots,\omega_{k-1}$ for $k\geq 1$.

In this way, 
\begin{equation*}
\partial_{\varepsilon}^{k}\left(N (y(t,\varepsilon),y (t-\omega(\varepsilon),\varepsilon),y(t-s_{0}\omega(\varepsilon),\varepsilon))\right)\vert_{\varepsilon=0}=(k-1)!\tilde{N}_{k-1}(U_{1}, V_{1},\dots, U_{k-1}, V_{k-1};\omega_{0},\dots,\omega_{k-2}) (t)\varepsilon^{k}.
\end{equation*}

Thus, defining $N_{0} = N_{1} = 0$ and $N_{k} = \dfrac{1}{k} \tilde{N}_{k-1} (U_{1}, V_{1},\dots, U_{k-1}, V_{k-1};\omega_{0},\ldots,\omega_{k-2}) (t)$ for $k\geq 2$, we obtain,
\begin{equation*}
\begin{split} 
\gamma (\varepsilon) N(y(t,\varepsilon),y (t-\omega(\varepsilon),\varepsilon),y(t-s_{0}\omega(\varepsilon),\varepsilon))
&=
\sum\limits_{k=0}^{\infty} \left(  \sum\limits_{k_{2}=0}^{k} \gamma_{k-k_{2}} N_{k_{2}} \right)\varepsilon^{k}	\\
&=
\sum\limits_{k=2}^{\infty} \left(  \sum\limits_{k_{2}=2}^{k} \gamma_{k-k_{2}} N_{k_{2}} \right)\varepsilon^{k}.
\end{split}
\end{equation*}

Thus, the right side of (\ref{DDE_GM_en_formato_N_equal_f_minus_L_delays_w_s0w_perturbed}) at order $\varepsilon^{k}$ is given by,
\begin{equation*}
	\begin{split}
		\sum\limits_{k_{2}=2}^{k} \gamma_{k-k_{2}} N_{k_{2}}
		&=
		\sum\limits_{n\in\mathbb{Z}}\left( \sum\limits_{k_{2}=2}^{k} \gamma_{k-k_{2}} \hat{N}_{k_{2}}(n) \right) e^{int}	\\
		&=
		\sum\limits_{n\in\mathbb{Z}} \hat{\rho}_{k} (n) e^{int},
	\end{split}
\end{equation*}
where $\hat{\rho}_{k} (n) = \sum\limits_{k_{2}=2}^{k} \gamma_{k-k_{2}} \hat{N}_{k_{2}} (n)$, with $\hat{N}_{k_{2}}(n)$ is the $n$-th Fourier coefficient of $N_{k_{2}}$. Therefore, (\ref{DDE_GM_en_formato_N_equal_f_minus_L_delays_w_s0w_perturbed}) is equivalent to solving,
\begin{equation}\label{lineal_Fourier_system_at_order_epsilon_power_k}
	\begin{split}
		\Delta (ni\omega_{0},\gamma_{0}) \hat{y}_{k}(n)
		&+
		\left[  \tilde{\Delta} (ni\omega_{k-1},\gamma_{k-1},ni\omega_{0}) + \gamma_{0}ni\omega_{k-1} (e^{-ni\omega_{0}} B_{1} + s_{0}e^{-ni\omega_{0}s_{0}} B_{2})  \right] \hat{y}_{1}(n)	\\
		&+
		\hat{L}_{k}(n)	\\
		&=
		\hat{\rho}_{k}(n).
	\end{split}
\end{equation}

Notice that $\left[ \tilde{\Delta} (ni\omega_{k-1},\gamma_{k-1},ni\omega_{0}) + \gamma_{0}ni\omega_{k-1} (e^{-ni\omega_{0}} B_{1} + s_{0} e^{-ni\omega_{0}s_{0}} B_{2}) \right] $ is the only term that contains $\gamma_{k-1}$ and $\omega_{k-1}$ and $\Delta (ni\omega_{0},\gamma_{0}) \hat{y}_{k}(n)$ is the only term that contains $\hat{y}_{k}(n)$. Defining $\hat{G}_{k}(n) = \hat{\rho}_{k}(n)-\hat{L}_{k}(n)$ and $\hat{R}_{k}(n)$ given by,
\begin{equation*}
\hat{R}_{k}(n)
=
-
\left[ \tilde{\Delta} (ni\omega_{k-1},\gamma_{k-1},ni\omega_{0}) + \gamma_{0}ni\omega_{k-1} (e^{-ni\omega_{0}} B_{1} + s_{0} e^{-ni\omega_{0}s_{0}} B_{2}) \right] \hat{y}_{1}(n)
+
\hat{G}_{k}(n),
\end{equation*}
then (\ref{lineal_Fourier_system_at_order_epsilon_power_k}) is equivalent to $\Delta (ni\omega_{0},\gamma_{0}) \hat{y}_{k}(n)=\hat{R}_{k}(n)$. Using Proposition \ref{proposition_for_finding_periodic_solutions} we have that $\Delta(i\omega_{0},\gamma_{0}) \hat{y}_{k}(1) = \hat{R}_{k}(1) $ has at least a solution if and only if,
\begin{equation*}
\begin{pmatrix}
	\gamma_{k-1} \\ \\
	\omega_{k-1}
\end{pmatrix} 
=
C^{-1}
\begin{pmatrix}
	\operatorname{Re}\left(\hat{\psi}(-1)\hat{G}_{k}(1) \right) \vspace{1mm} \\ 
	\operatorname{Im}\left(\hat{\psi}(-1)\hat{G}_{k}(1) \right)
\end{pmatrix}.
\end{equation*}

The last observation completes the proof of the existence of the Poincar\'e-Lindstedt series to all orders.
\end{proof}

\begin{remark}\label{remark_implementation_coefficients_of_Poincare-Lindstedt_series}
The existence of Poincar\'e - Lindstedt series to any order was proved. The expressions of the coefficients are explicit and recursive, meaning that at every order we can compute the next term of the series using previously computed terms. This makes it easy to numerical implement and is not necessary symbolic computation, optimizing computing resources.

We emphasize that the use of automatic differentiation provides formulas that are readily implementable. From the numerical point of view, this is a practical method whose implementation is simple since it only requires evaluating formulas with no need of symbolic manipulations.

The method presented above can be extended to nonlinear functions with asymptotic expansion and we plan to extend the study of the system of the form (\ref{DDE_GM_noncentered_delays_1_s0}) with diffusion for a future study. 

The easy implementation of Poincar'{e} - Lindstedt series allows it to be used as an initial guess for a correction method, such as a Newton method, which is the initial an principal use in our numerical simulations. We implemented the coefficients at any order, but for the Newton method the coefficients at order 3 is enough.
\end{remark}

\section{Numerical Results}
In this section we illustrate some numerical simulations of (\ref{DDE_GM_around_0}). Thus, lets consider the parameters $a$, $b$, $c$, and an equilibrium $\left(u_{0}, v_{0}\right)$ of (\ref{DDE_GM_noncentered_delays_1_s0}), where $\left(u_{0}, v_{0}\right)$ is such that the parameters in (\ref{eq:b0_b1_b2_tau_values}) satisfy the hypothesis of Proposition \ref{Hopf_bifurcation_theorem}. Three sets of parameters are analyzed: $a = \frac{1}{10}$, $b = \frac{11}{60}$, $c = 11$ (Parameter Set 1) with the corresponding equilibrium $\left(u_{0}, v_{0}\right)=(3,20)$; $a = \frac{1}{10}$ ,$b = 1$, $c = 10^{-6}$ (Parameter Set 2) with the corresponding equilibrium $\left(u_{0}, v_{0}\right) \approx (1.09999917, 1.20999918)$, and $a = \frac{1}{10}$, $b = 1$, $c = 0$ (Parameter Set 3) with the corresponding equilibrium $\left(u_{0}, v_{0}\right)=\left(1.1, 1.21 \right)$ (these parameter values coincide with the ones in \cite{murrayII}). For each parameter set we consider the delay values $s_{0}\in\left\lbrace 1.5, 2, 3, 5, 10 \right\rbrace$. We propose the idea of implement the values of delay $s_{0}$ and observe the behaviour of model (\ref{DDE_GM_around_0}).

Using Proposition \ref{Hopf_bifurcation_theorem} and remembering that $\gamma_{0}$ denotes the bifurcation parameter value, we compute the corresponding Hopf bifurcation points of system (\ref{DDE_GM_around_0}), which are shown in \textbf{Figure \ref{f:Initial_points_of_Hopf_bifurcation}}. For the Parameter Set 1 we consider $\gamma_{0}\in\left(0, 30 \right)$, and for the Parameter Set 2 and 3 we consider $\gamma_{0}\in\left(0, 2 \right)$ to obtain up to four Hopf bifurcation points. Observe that the last two parameter sets are very close and so are the graphs of the Hopf points.
\begin{figure}[h!]
 \centering
 \subfloat[Hopf bifurcation points using Parameter Set 1.]
 {
    \includegraphics[width=0.4\textwidth]{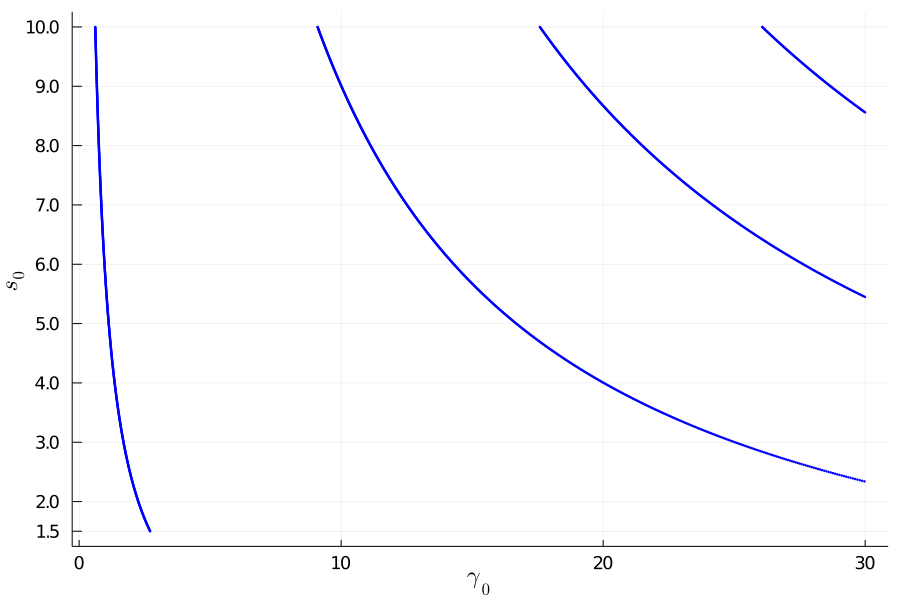}
 }
 \hspace{2mm}
  \subfloat[Hopf bifurcation points for Parameter Set 2 (\textcolor{green}{$\bullet$}) and Parameter Set 3 (\textcolor{red}{$\times$}).]
 {
   \label{f:figure_initial_points_for_Hopf_bifurcation_of_Par_Set_2_and_3}
    \includegraphics[width=0.4\textwidth]{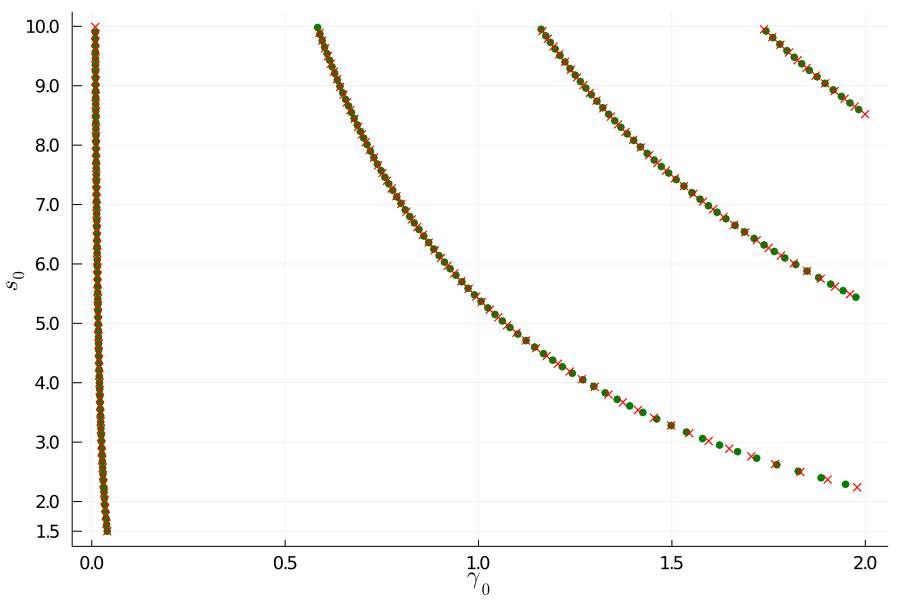}
 }
 \caption{Hopf bifurcation points for different Parameter Sets.}
 \label{f:Initial_points_of_Hopf_bifurcation}
\end{figure}

We implemented a Julia script to obtain coefficients of the Poincar\'e-Lindstedt series at any order of $y$, $\gamma$ and the frecuency $\omega$, given in (\ref{y_U_V_perturbed}) and (\ref{gamma_and_omega_perturbed}). We observe in \textbf{Figure \ref{f:figure_eps^k_by_yk}} that in our examples, the values of $\varepsilon^{k}\vert y_{k}\vert_{\infty}$ decreace numerically with particular values of $\varepsilon$. Note that the values of $\varepsilon$ of pannels \textbf{(\ref{f:figure_PS1_and_s0_eq_10})} and \textbf{(\ref{f:figure_PS2_and_s0_eq_10})} are comparable to pannels \textbf{(\ref{f:figure_PS1_and_s0_eq_1p5})} and \textbf{(\ref{f:figure_PS2_and_s0_eq_1p5})} up to a factor $10^{-1}$.
\begin{figure}[h!]
 \centering
 \subfloat[Parameter Set 1 and $s_{0}=1.5$]
 {
 	\label{f:figure_PS1_and_s0_eq_1p5}
    \includegraphics[width=0.4\textwidth]{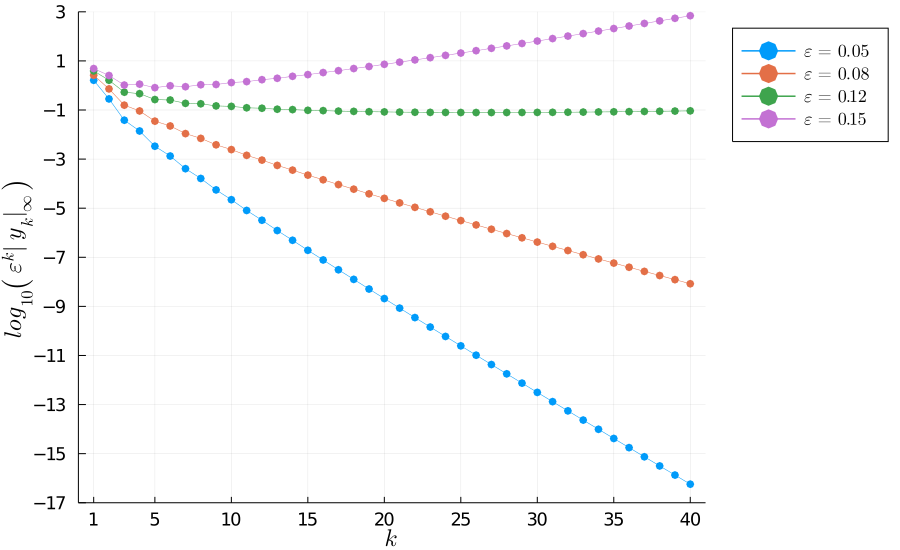}
 }
 \hspace{2mm}
  \subfloat[Parameter Set 1 and $s_{0}=10$]
 {
 	\label{f:figure_PS1_and_s0_eq_10}
    \includegraphics[width=0.4\textwidth]{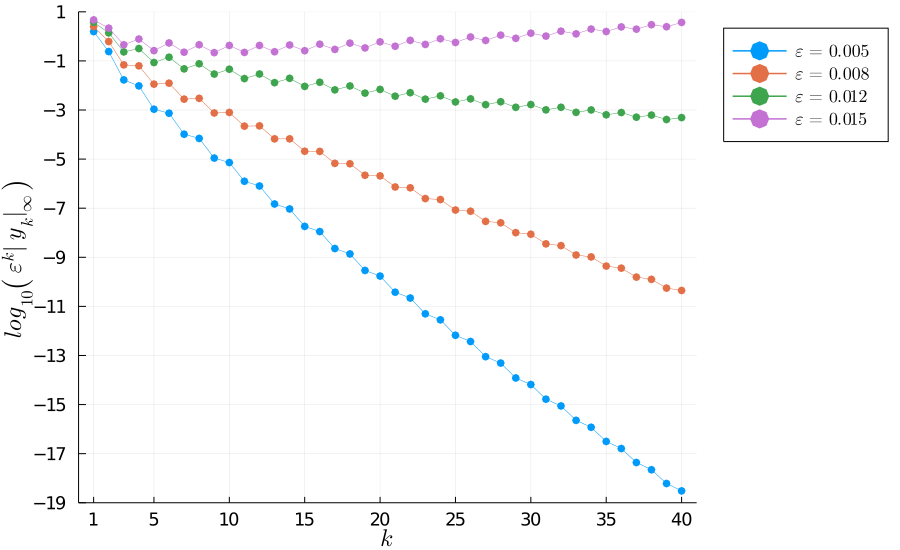}
 }
 \hspace{2mm}
  \subfloat[Parameter Set 2 and $s_{0}=1.5$]
 {
 	\label{f:figure_PS2_and_s0_eq_1p5}
    \includegraphics[width=0.4\textwidth]{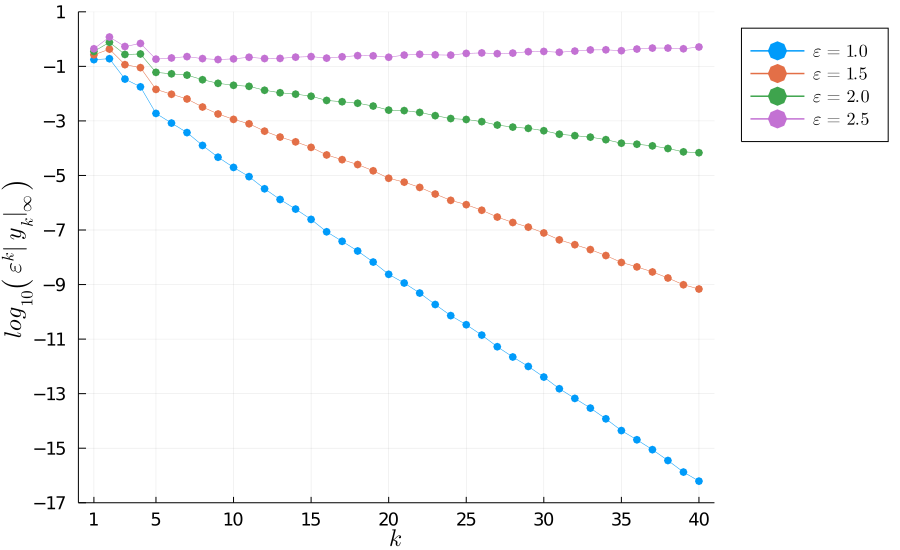}
 }
 \hspace{2mm}
  \subfloat[Parameter Set 2 and $s_{0}=10$]
 {
 	\label{f:figure_PS2_and_s0_eq_10}
    \includegraphics[width=0.4\textwidth]{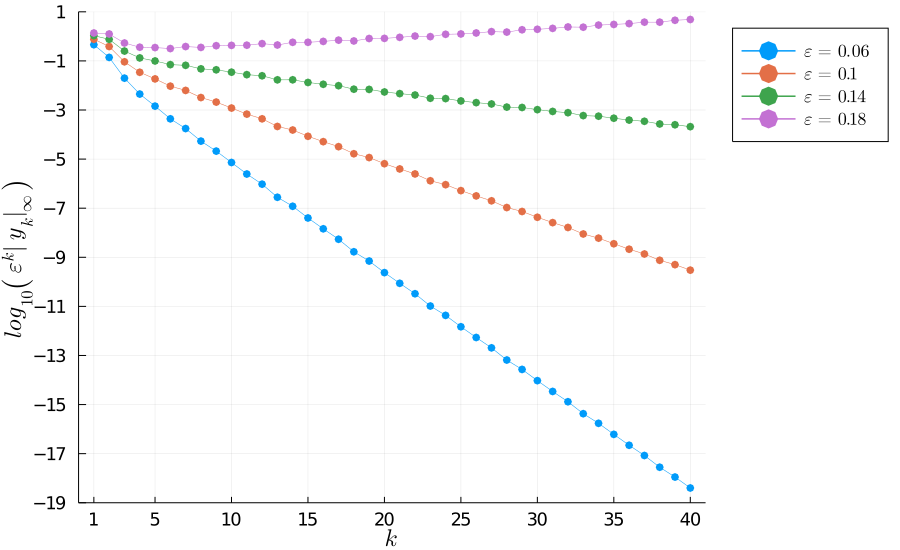}
 }
 \caption{Different Poincar\'e-Lindstedt series of periodic solutions $y$ of (\ref{DDE_GM_around_0}) and its behavior with particular values of $\varepsilon$.
}
 \label{f:figure_eps^k_by_yk}
\end{figure}

The main application of the Poincar\'e-Lindstedt series in this work is to approximate periodic solutions of (\ref{DDE_GM_around_0}) which are used as an initial guess to be corrected by a Newton method algorithm. For this, we use a collocation method (see for example \cite{doedel_keller_kernevezI}, \cite{doedel_keller_kernevezII}, \cite{engelborghs_luzyanina_hout_roose_01} and \cite{verheyden_lust_05}) over the following boundary value problem, transforming (\ref{DDE_GM_en formato_f}) in a delay differential equation with $1-$periodic solution,
\begin{subequations}\label{eq:bvp} 
	\begin{align} 
		T \gamma f(x(t), x(t-1/T), x(t-s_{0}/T))-x'(t)
		&=
		0 \text{ for }t\in\left[0,1 \right], \label{eq:eq1}\\
		x(\theta + 1) - x(\theta)
		&=
		0 \text{ for }\theta\in\left[-s_{0}/T,0 \right], \label{eq:periodicity_condition}\\
		\alpha(x, T)
		&=
		0, \label{eq:phase_conditon}
	\end{align} 
\end{subequations} 
where $\gamma$ is a knwon fixed value and $T$ denotes the (unknown) period. Equation (\ref{eq:periodicity_condition}) is the periodicity condition and (\ref{eq:phase_conditon}) represents a suitable phase condition to remove traslational invariance. The collocation method generates a Newton method that approximates points on the curve. The initial guess is given for the Poincar\'e-Lindstedt series at order $\varepsilon^{3}$ and this order is sufficient for the convergence of the Newton method in all our simulations. In \textbf{Figure \ref{f:figure_Poincare_Lindstedt_vs_Newton_method}} we comparate the distance beetwen $y_{PL}$ (the Poincar\'e-Lindstedt series at order $\varepsilon^{3}$) and $y_{NM}$ (the approximated periodic solution given by the Newton method). This refinement in the solution check again the usefulness of this perturbative method for approximating periodic solutions.
\begin{figure}[h!]
 \centering
 \subfloat[Distance between the points that lies on $y_{PL}$ at order $\varepsilon^{3}$ and $y_{NM}$ for the first bifurcation point of $s_{0} = 1.5$. This graph is identified as the blue point on $s_{0}=1.5$ of the right panel.]
 {
    \includegraphics[width=0.45\textwidth]{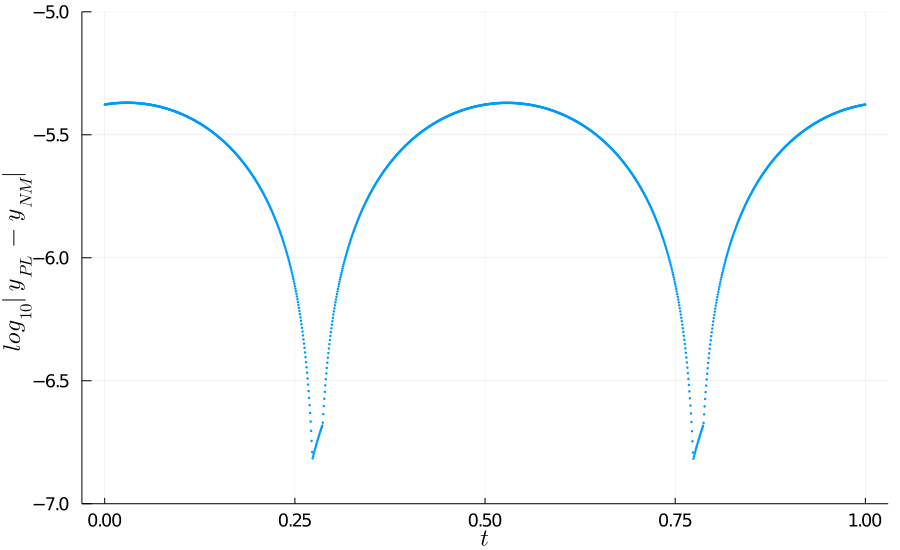}
 }
 \hspace{2mm}
  \subfloat[Distance between $y_{PL}$ at order $\varepsilon^{3}$ and the solution $y_{NM}$ for all simulations. The color \textcolor{blue}{blue} is for the Parameter Set 1, color \textcolor{red}{red} is for Parameter Set 2 and color \textcolor{green}{green} is for Parameter Set 3. Marker $\circ$ is the first branch, $\times$ is the second branch, $\diamond$ is the third branch and $\bigtriangleup$ is the fourth branch.]
 {
    \includegraphics[width=0.45\textwidth]{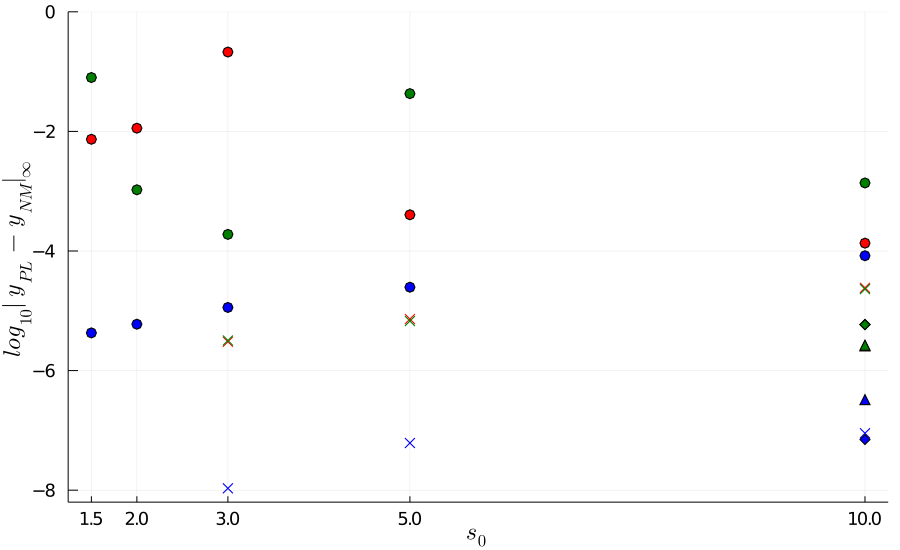}
 }
 \caption{Distance between the Poincar\'{e}-Lindstedt at order $\varepsilon^{3}$ and the solution obtained by a Newton method.}
 \label{f:figure_Poincare_Lindstedt_vs_Newton_method}
\end{figure}

Thus, a pseudo-arclength method also was implemented (see \cite{keller87}, \cite{doedel_keller_kernevezI} and \cite{lessard2018} for detailed examples of computation of branches of periodic solutions). The first branches of periodic solutions of model (\ref{DDE_GM_around_0}) for the Parameter Set 1 are shown in \textbf{Figure \ref{f:L2_principal_brances}}.
\begin{figure}[h!]
 \centering
 \subfloat[$s_{0} = 1.5$]
 {
    \includegraphics[width=0.35\textwidth]{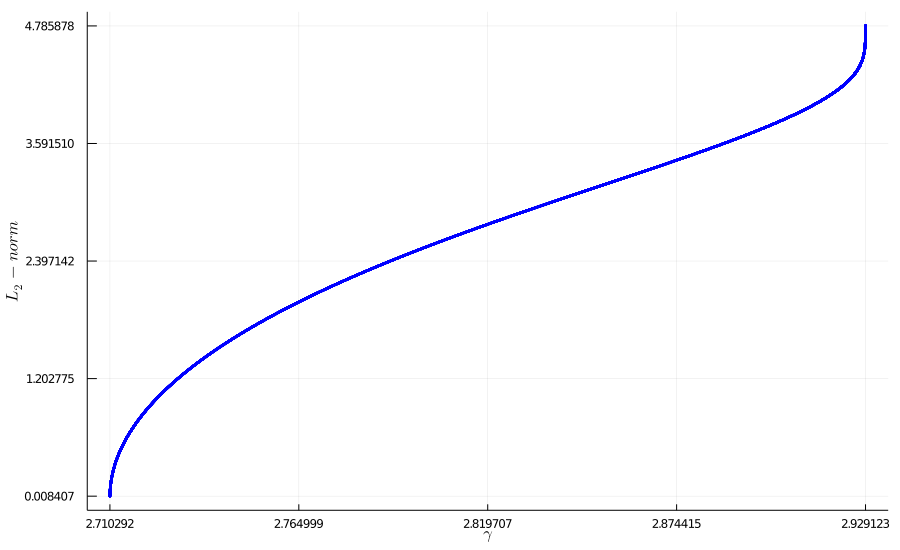}
 }
 \hspace{2mm}
  \subfloat[$s_{0} = 2$]
 {
    \includegraphics[width=0.35\textwidth]{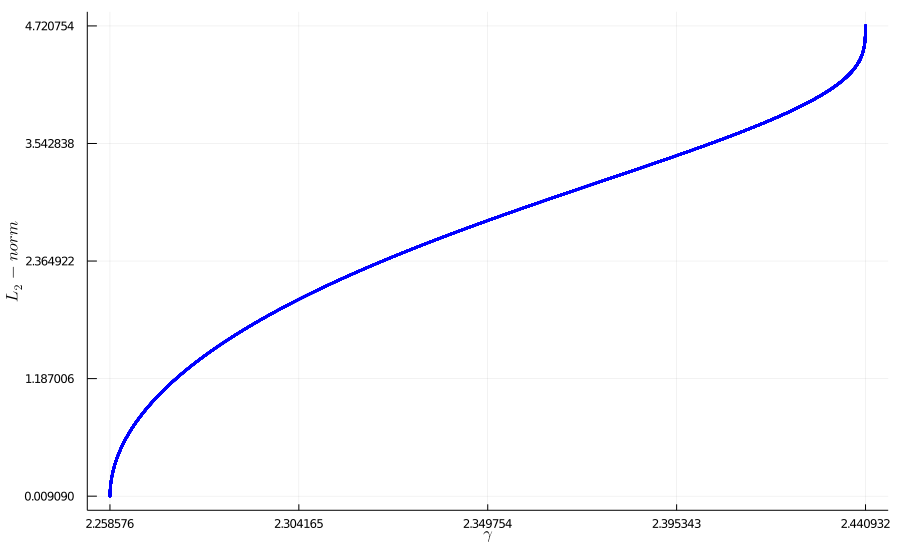}
 }
 \hspace{2mm}
  \subfloat[$s_{0} = 3$]
 {
    \includegraphics[width=0.35\textwidth]{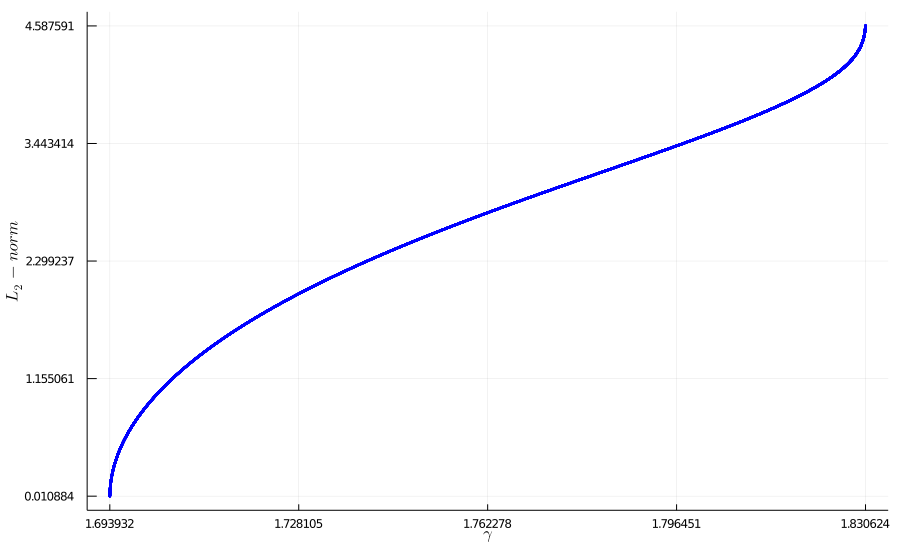}
 }
 \hspace{2mm}
  \subfloat[$s_{0} = 5$]
 {
    \includegraphics[width=0.35\textwidth]{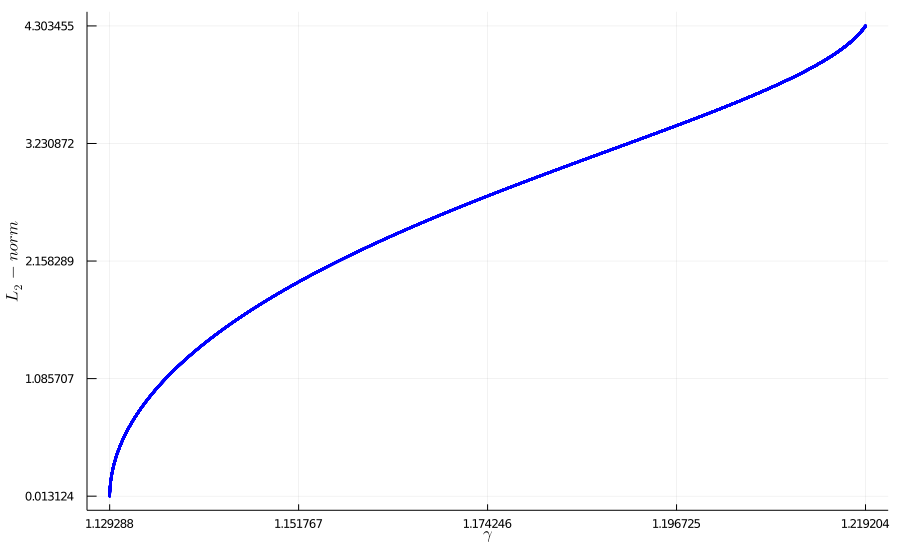}
 }
 \hspace{2mm}
  \subfloat[$s_{0} = 10$]
 {
    \includegraphics[width=0.35\textwidth]{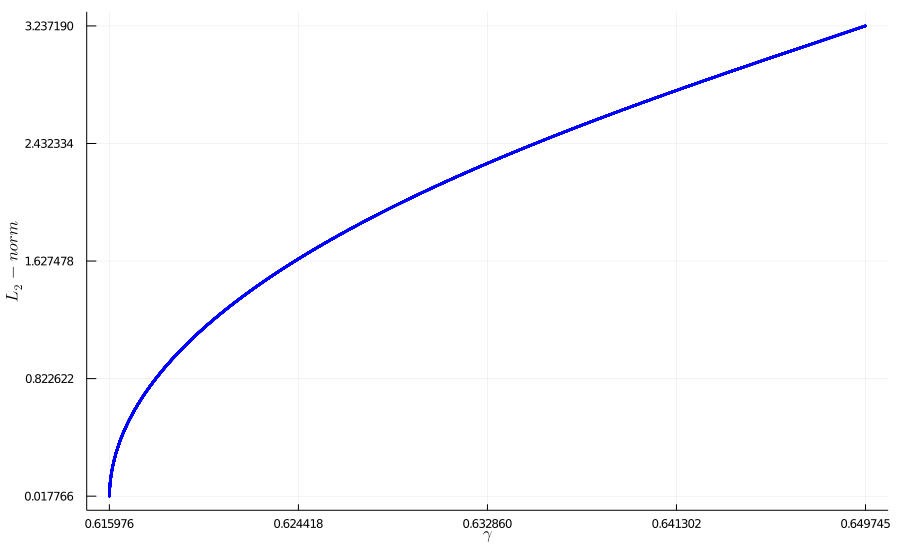}
 }
 \caption{First branches parameterized by $\gamma$ for the Parameter Set 1. Note the asymptotic behavior in almost all of these branches with the $L_{2}-$norm. These graphs are related with the Figure \ref{f:most_right_periodic_solutions_Par_set_1}, which suggests the presence of homoclinic orbits.}
 \label{f:L2_principal_brances}
\end{figure}

In \textbf{Figure \ref{f:figure_g_vs_NormInfty}} observe that in our simulations, all first continuation branches to bifurcate are close to each other. This situation occurs as we change the values of $s_{0}$. We notice that the bifurcation brances that emerge for large $\gamma_{0}$ values are also close to each other in all models that we consdered.
\begin{figure}[h!]
 \centering
 \subfloat[Parameter Set 1.]
 {
    \includegraphics[width=0.45\textwidth]{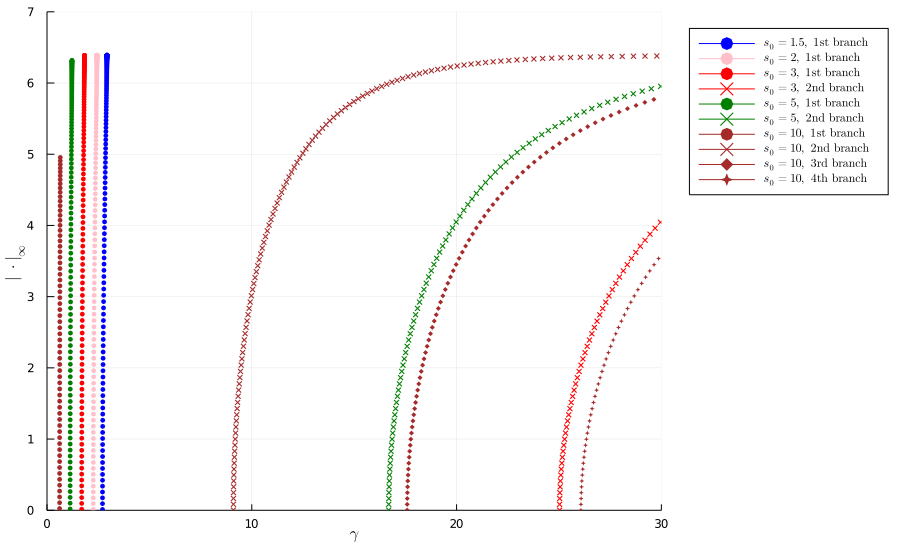}
 }
 \hspace{2mm}
  \subfloat[Parameter Set 3.]
 {
    \includegraphics[width=0.45\textwidth]{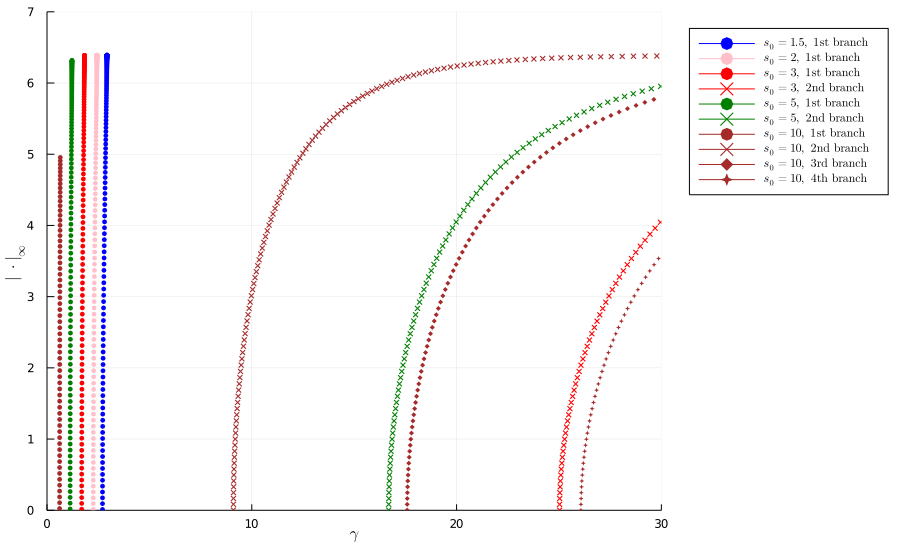}
 }
 \caption{Continuation of periodic solution of equation (\ref{DDE_GM_around_0}) with different parameter sets.}
 \label{f:figure_g_vs_NormInfty}
\end{figure}

In \textbf{Figure \ref{f:most_right_periodic_solutions_Par_set_1}} show the last computed periodic solutions of the continuation branches of \textbf{Figure \ref{f:L2_principal_brances}}. Note that the periodic solutions of \textbf{Figure \ref{f:most_right_periodic_solutions_Par_set_1}} are solutions of model (\ref{DDE_GM_noncentered_delays_1_s0}) after translating. 
\begin{figure}[h!]
 \centering
 \subfloat[$s_{0} = 1.5$, principal branch.]
 {
    \includegraphics[width=0.31\textwidth]{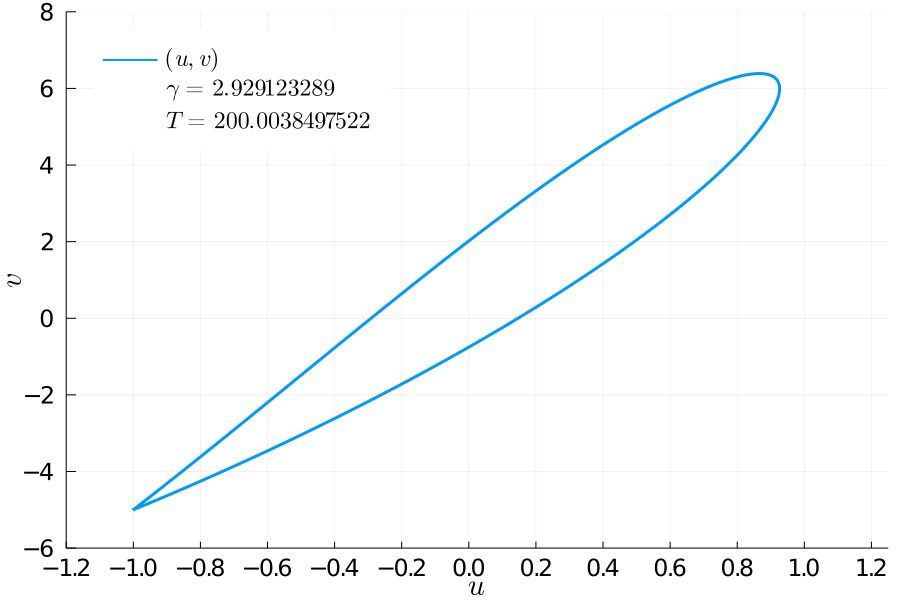}
 }
 \hspace{2mm}
  \subfloat[$s_{0} = 2$, principal branch.]
 {
    \includegraphics[width=0.31\textwidth]{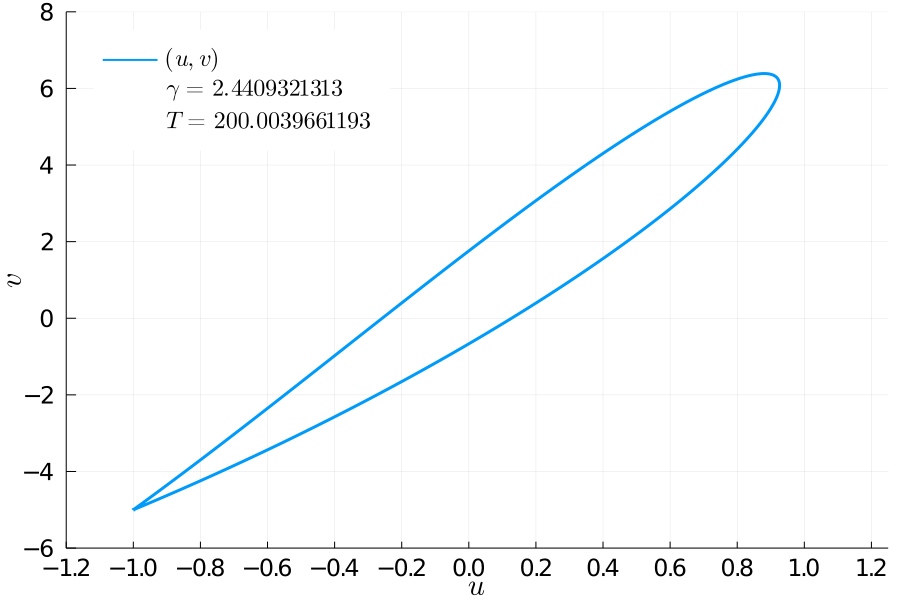}
 }
 \hspace{2mm}
  \subfloat[$s_{0} = 3$, principal branch.]
 {
    \includegraphics[width=0.31\textwidth]{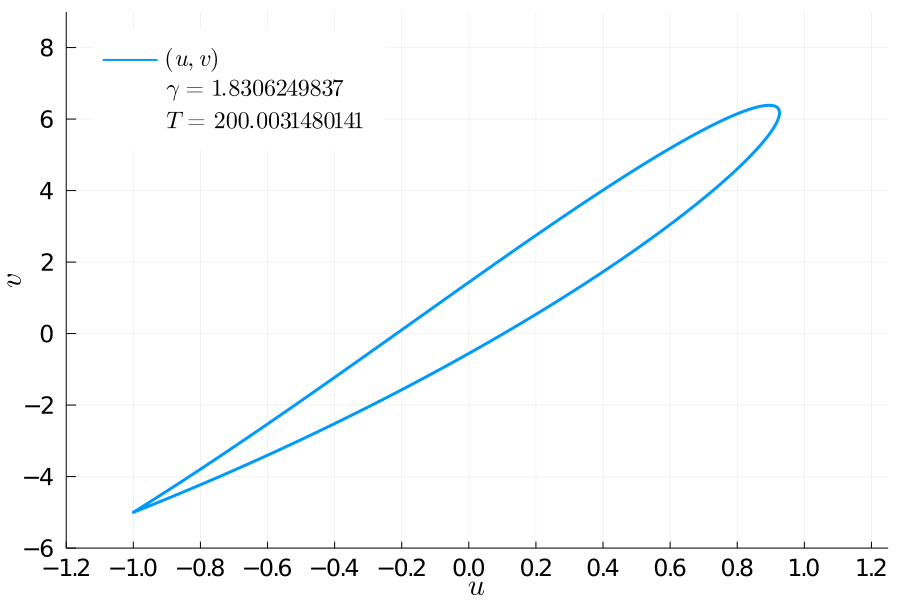}
 }
 \hspace{2mm}
  \subfloat[$s_{0} = 3$, second branch.]
 {
    \includegraphics[width=0.31\textwidth]{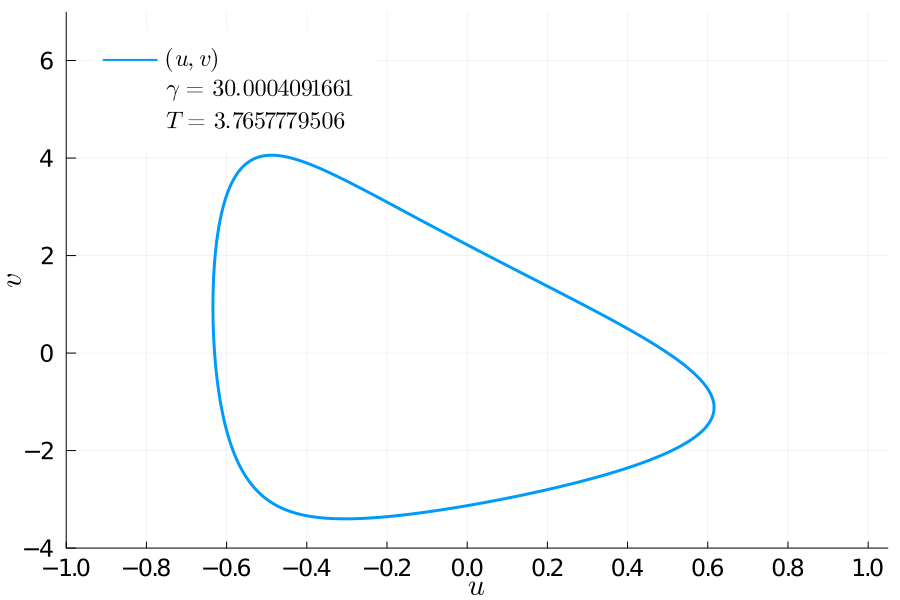}
 }
 \hspace{2mm}
  \subfloat[$s_{0} = 5$, principal branch.]
 {
    \includegraphics[width=0.31\textwidth]{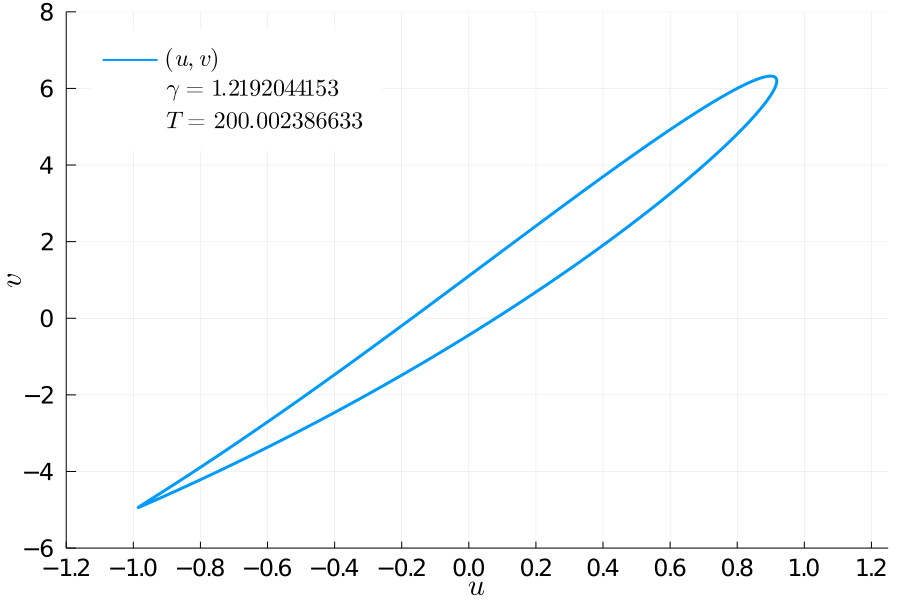}
 }
  \hspace{2mm}
  \subfloat[$s_{0} = 5$, second branch.]
 {
    \includegraphics[width=0.31\textwidth]{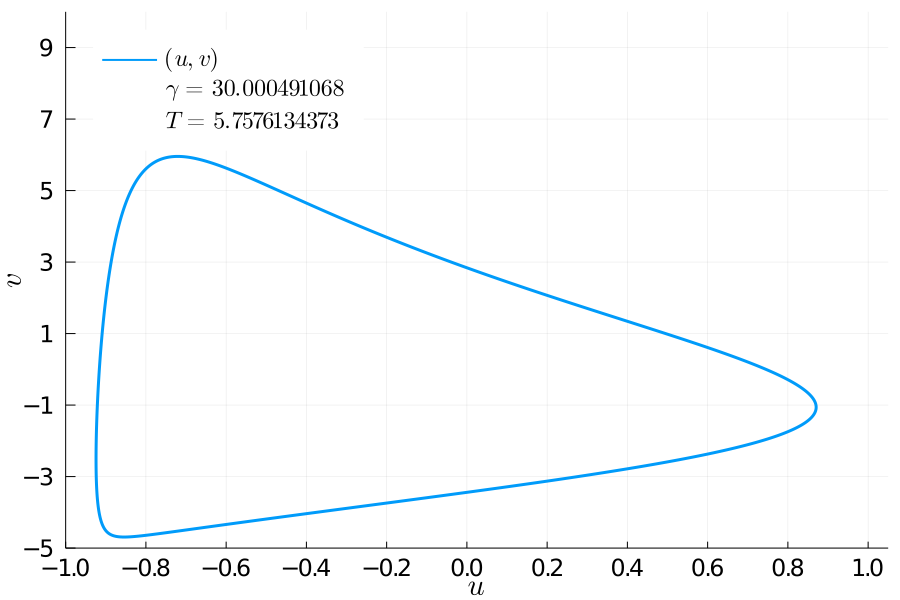}
 }
  \hspace{2mm}
  \subfloat[$s_{0} = 10$, principal branch.]
 {
    \includegraphics[width=0.31\textwidth]{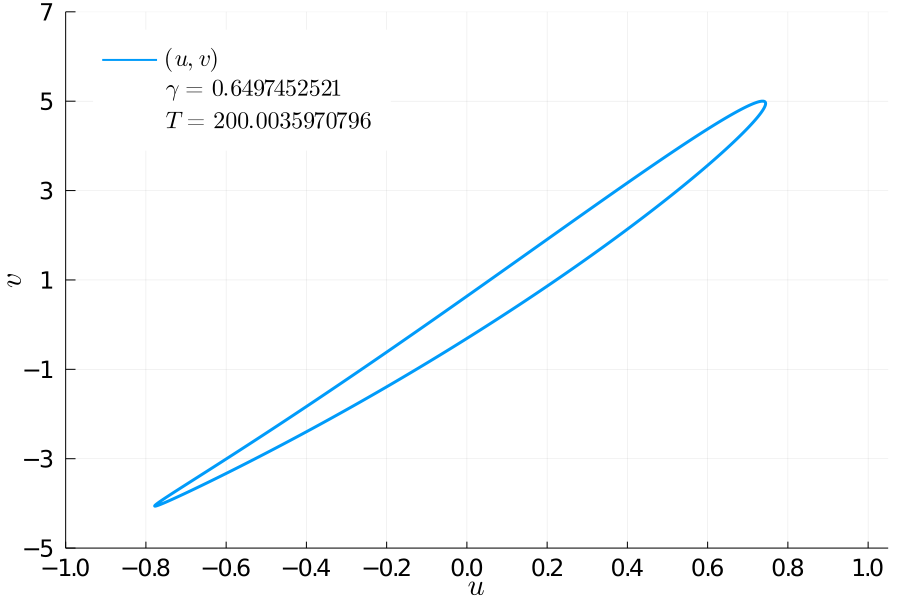}
 }
  \hspace{2mm}
  \subfloat[$s_{0} = 10$, second branch.]
 {
    \includegraphics[width=0.31\textwidth]{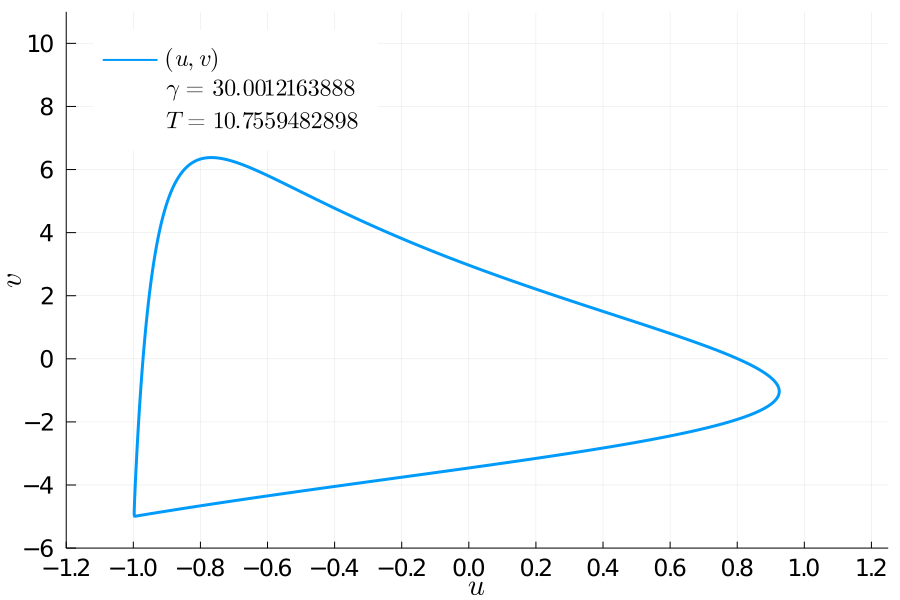}
 }
  \hspace{2mm}
  \subfloat[$s_{0} = 10$, third branch.]
 {
    \includegraphics[width=0.31\textwidth]{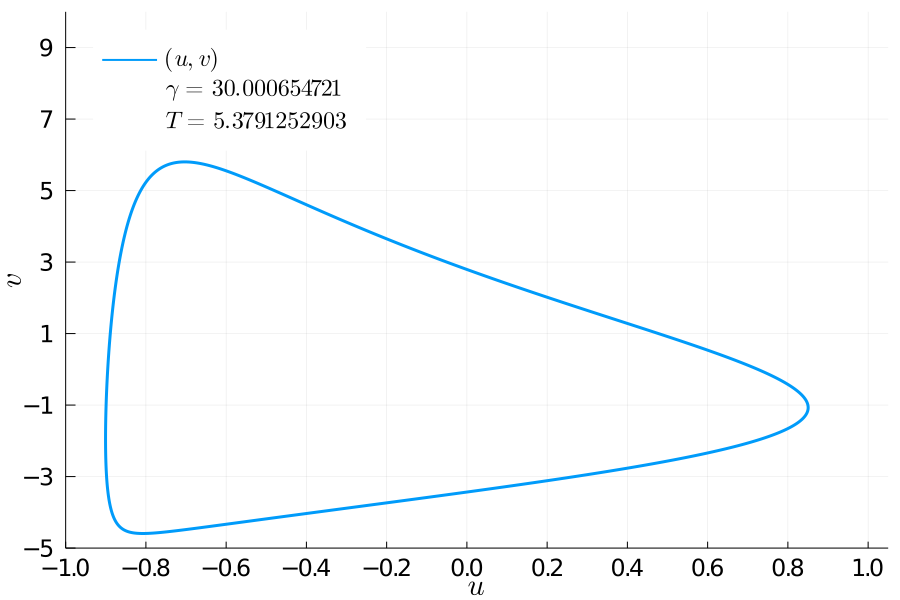}
 }
  \hspace{2mm}
  \subfloat[$s_{0} = 10$, fourth branch.]
 {
    \includegraphics[width=0.31\textwidth]{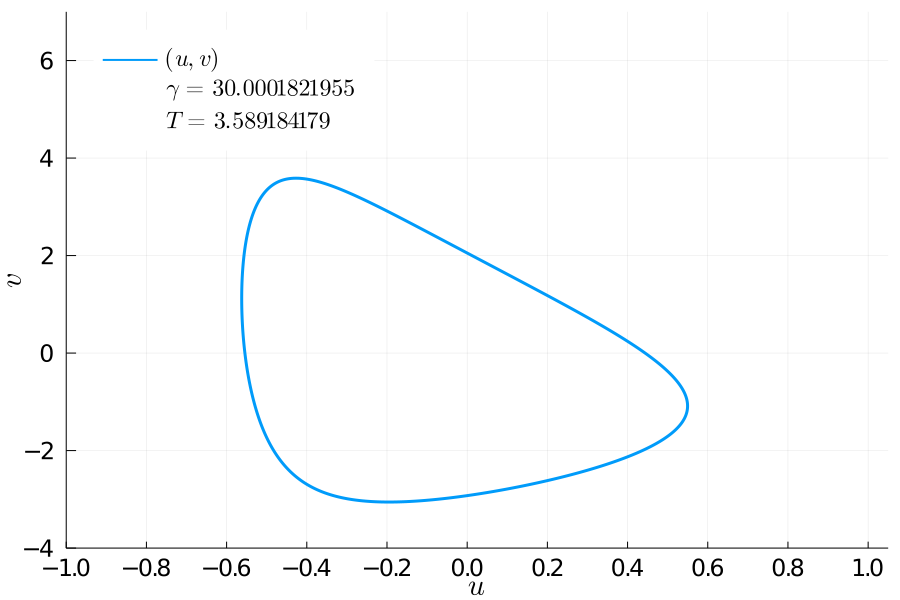}
 }
 \caption{Most right periodic solution of the principal branch of Hopf Bifurcation for Parameter Set 1 of the model (\ref{DDE_GM_around_0}), which lie to phase space $\mathcal{C}_{2,s0}$. A natural embedding of the solutions of our system into two dimensions are depicted in the $\left( u_{t}(0), v_{t}(0)\right)-$plane.}
 \label{f:most_right_periodic_solutions_Par_set_1}
\end{figure}
\section{Appendix}
We consider the Taylor expansion of the function $e (\varepsilon) = e^{-ins_{0}\omega (\varepsilon)}$, where $s_{0}\in\mathbb{R}$ and $n\in\mathbb{N}_{0}$. Using the expansion in \cite{haro_canadell_figueras_luque_mondelo} obtain $e(\varepsilon) = \sum\limits_{j=0}^{\infty}e_{j}\varepsilon^{j}$, where,

\begin{equation*}
e_{j}
=
\left\{
\begin{array}{l} 
e^{-ins_{0}\omega_{0}}, \text{ } j = 0, \\ \\
\dfrac{1}{j} \sum\limits_{l=0}^{j-1} (j-l) (-ins_{0} \omega_{j-l}) e_{l}, \text{ } j\geq 1.
\end{array}
\right.
\end{equation*}

\begin{proposition}
The coefficients $e_{j}$ only depend of $s_{0}$, $n$ and $\omega_{0},\dots,\omega_{j}$ for all $j \in \mathbb{N}_{0}$, i.e. $e_{j} = e_{j}(\omega_{0},\ldots,\omega_{j}; n, s_{0})$
\end{proposition}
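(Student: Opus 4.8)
The plan is to prove this by strong induction on $j$, reading off the dependence directly from the recurrence that defines the $e_j$. The base case is immediate: $e_0 = e^{-ins_0\omega_0}$, which visibly depends only on $s_0$, $n$ and $\omega_0$, so $e_0 = e_0(\omega_0; n, s_0)$.

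For the inductive step, I would fix $j \geq 1$ and assume that $e_l = e_l(\omega_0,\dots,\omega_l; n, s_0)$ for every $l$ with $0 \leq l \leq j-1$. Looking at
\begin{equation*}
e_j = \dfrac{1}{j}\sum_{l=0}^{j-1}(j-l)(-ins_0\omega_{j-l})e_l,
\end{equation*}
the key observation is an index bookkeeping one: as $l$ runs over $0,1,\dots,j-1$, the subscript $j-l$ runs over $j, j-1, \dots, 1$, so the explicit frequency factors appearing are exactly $\omega_1,\dots,\omega_j$ — in particular the highest index is $\omega_j$, attained at $l=0$. Meanwhile, by the induction hypothesis, each factor $e_l$ in the sum contributes only $\omega_0,\dots,\omega_l$ with $l \leq j-1$, hence no frequency of index larger than $j-1$. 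Combining the two, every term of the sum — and therefore $e_j$ itself — depends only on $\omega_0,\dots,\omega_j$, $n$ and $s_0$, which is precisely the claim $e_j = e_j(\omega_0,\dots,\omega_j; n, s_0)$.

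There is no real obstacle here; the only point requiring a moment of care is confirming that the recurrence never reaches beyond $\omega_j$, i.e. that the ``new'' frequency introduced at order $j$ is $\omega_j$ and not $\omega_{j+1}$, which is why the verification of the index range $1 \leq j-l \leq j$ in the summation is the load-bearing step. An entirely analogous argument applies verbatim to $\tilde e_j$, since its defining recurrence has the same structure, which is the form in which these coefficients are actually used in the proof of Proposition~\ref{proposition_for_finding_a_periodic_solution_at_each_order_epsilon_power_k}.
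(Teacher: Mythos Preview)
Your proof is correct and follows essentially the same strong-induction argument as the paper: verify the base case(s) directly and then read off from the recurrence that the only frequencies appearing at step $j$ are $\omega_{j-l}$ for $0\le l\le j-1$ together with those already inside $e_l$. Your index bookkeeping is in fact slightly more explicit than the paper's, and your single base case $j=0$ suffices (the paper additionally checks $j=1$, which is harmless but redundant).
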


\begin{proof}
We proceed by induction on $j$. The affirmation is true for $k\in\left\lbrace 0,1 \right\rbrace$ since $e_{0} = e^{-i n s_{0} \omega_{0}}$ and $e_{1} = -i n s_{0} \omega_{1} e^{-ins_{0}\omega_{0}}$.

Suppose that $e_{0},e_{1},\dots, e_{j-1}$ satisfies the property. Now, we show that $e_{j}$ satisfies the property. This is inmediate because using the definition and induction hypothesis,
	\begin{equation*}
		\begin{split}
			e_{j}
			&=
			\dfrac{1}{j} \sum\limits_{l=0}^{j-1} (j-l)(-in s_{0} \omega_{j-l}) e_{l}, \\
			&=
			\dfrac{1}{j} \sum\limits_{l=0}^{j-1} (j-l)(-in s_{0} \omega_{j-l})e_{l}(\omega_{0},\dots,\omega_{l};n,s_{0}).
		\end{split}
	\end{equation*}
\end{proof}
Thus, by Taylor theorem, $\dfrac{d^{j}}{d\varepsilon^{j}} \left(e^{-nis_{0}\omega(\varepsilon)}\right)\bigg\vert_{\varepsilon=0} = j! e_{j} (\omega_{0},\dots,\omega_{j};n,s_{0})$. In particular, for $k \geq 3$,
\begin{equation*}
e_{k-1} (\omega_{0},\dots,\omega_{k-1};n,s_{0})
=
-ins_{0}\omega_{k-1} e^{-ins_{0}\omega_{0}} + \tilde{e}_{k-2} (\omega_{0},\dots,\omega_{k-2};n,s_{0}),
\end{equation*}
where $\tilde{e}_{k-2} (\omega_{0},\dots,\omega_{k-2};n,s_{0}) = \dfrac{1}{k-1} \sum\limits_{l=1}^{k-2} (k-1-l)(-ins_{0}\omega_{k-1-l}) e_{l}(\omega_{0},\dots,\omega_{l};n,s_{0})$.

\section{Acknowledgment}
We would like to express our gratitude to the following organizations for their support: DGAPA-UNAM through project PAPIIT IN103423, IN104725, CONACYT for the PhD fellowship.
\bibliographystyle{alpha}
\bibliography{library1}

\newcommand{\etalchar}[1]{$^{#1}$}
\begin{thebibliography}{EJDK91b}

\bibitem[CG82]{cooke_grossman}
Kenneth~L Cooke and Zvi Grossman.
\newblock Discrete delay, distributed delay and stability switches.
\newblock {\em Journal of Mathematical Analysis and Applications},
  86(2):592--627, 1982.

\bibitem[EJDK91a]{doedel_keller_kernevezI}
H.~B.~Keller E.~J.~Doedel and J.~P. Kern\'{e}vez.
\newblock Numerical analysis and control of bifurcation problems (i):
  Bifurcation in finite dimensions.
\newblock {\em Internat. J. Bifur. and Chaos}, 1(3):493--520, 1991.

\bibitem[EJDK91b]{doedel_keller_kernevezII}
H.~B.~Keller E.~J.~Doedel and J.~P. Kern\'{e}vez.
\newblock Numerical analysis and control of bifurcation problems (ii):
  Bifurcation in finite dimensions.
\newblock {\em Internat. J. Bifur. and Chaos}, 1(4):745--772, 1991.

\bibitem[Eng01]{engelborghs_luzyanina_hout_roose_01}
K.~Engelborghs.
\newblock Collocations methods for the computation of periodic solutions of
  delay differential equations.
\newblock {\em SIAM Jounal on Scientific Computing}, 22(5):1593 -- 1609, 2001.

\bibitem[GL96]{gopalsamy1996395}
K.~Gopalsamy and I.~Leung.
\newblock Delay induced periodicity in a neural netlet of excitation and
  inhibition.
\newblock {\em Physica D: Nonlinear Phenomena}, 89(3):395--426, 1996.

\bibitem[GM72]{gierer_meinhardt1972}
A.~Gierer and H.~Meinhardt.
\newblock A theory of biological pattern formation.
\newblock {\em Kybernetik}, 12:30--39, 1972.

\bibitem[GW13]{guo_wu}
Shangjiang Guo and Jianhong Wu.
\newblock {\em Bifurcation Theory of Functional Differential Equations}.
\newblock Springer, New York, NY, 1 edition, 2013.

\bibitem[Hal71]{hale71}
J.~Hale.
\newblock {\em Functional Differential Equations}.
\newblock Springer Verlag, 1st ed. edition, 1971.

\bibitem[HCF{\etalchar{+}}16]{haro_canadell_figueras_luque_mondelo}
A.~Haro, M.~Canadell, J-LL. Figueras, A.~Luque, and J-M. Mondelo.
\newblock {\em The Parameterization Method for Invariant Manifolds: From
  Rigorous Results to Effective Computations}, volume 195.
\newblock Springer, 1 edition, 2016.

\bibitem[HL93]{hale_verduyn93}
Jack~J. Hale and Sjoerd M.~Verduyn Lunel.
\newblock {\em Introduction to Functional Differential Equations}.
\newblock Springer, New York, 1st ed. edition, 1993.

\bibitem[Kel87]{keller87}
H.~B. Keller.
\newblock {\em Lectures on numerical methods in bifurcation problems},
  volume~79.
\newblock Tata Intstitute of Fundamental Research, 1 edition, 1987.

\bibitem[Lee10]{lee_gaffney_monk}
S.~Seirin Lee.
\newblock The influence of gene expression time delays on gierer-meinhardt
  pattern formation systems.
\newblock {\em Bulletin of Mathematical Biology}, (72):2139--2160, 2010.

\bibitem[Les18]{lessard2018}
Jean-Philippe Lessard.
\newblock Contnuation of solutions and studiyng delay differential equations
  via rigurous numerics.
\newblock {\em Rigurous numerics in dynamics}, 74(3):81 -- 122, 2018.

\bibitem[Mur03]{murrayII}
J.~D. Murray.
\newblock {\em Mathematical Biology II, Spatial Models and Biomedical
  Applications}.
\newblock Springer, New York, NY, 3 edition, 2003.

\bibitem[Nus73]{nussbaum_73}
Roger~D. Nussbaum.
\newblock Periodic solutions of analytic functional differential equations are
  analytic.
\newblock {\em Michigan Mathematical Journal}, 20(3):249 -- 255, 1973.

\bibitem[Tur52]{turing52}
A.~M. Turing.
\newblock The chemical basis of morphogenesis.
\newblock {\em Phil. Trans. R. Soc. Lond.}, B 237:37--72, 1952.

\bibitem[Ver05]{verheyden_lust_05}
Koen Verheyden.
\newblock A \uppercase{N}ewton-\uppercase{P}icard collocation method for
  periodic solutions of delay differential equations.
\newblock {\em BIT Numerical Mathematics}, 45(1):605--625, 2005.

\end{thebibliography}
\end{document}